\theoremstyle{plain}
\newtheorem{theorem}{Theorem}[section]
\newtheorem{lemma}[theorem]{Lemma}
\newtheorem{corollary}[theorem]{Corollary}
\theoremstyle{definition}
\newtheorem{example}[theorem]{Example}
\newtheorem{remark}[theorem]{Remark}
\newcommand{\eps}{\varepsilon}
\DeclareMathOperator{\im}{Im}
\DeclareMathOperator{\re}{Re}
\DeclareMathOperator{\sech}{sech}
\DeclarePairedDelimiter{\abs}{\lvert}{\rvert}
\DeclarePairedDelimiter{\norm}{\lVert}{\rVert}
\DeclarePairedDelimiter{\cc}{[}{]} 
\DeclarePairedDelimiter{\co}{[}{[} 
\newcommand{\R}{\ensuremath{\mathbb{R}}}
\newcommand{\C}{\ensuremath{\mathbb{C}}}
\newcommand{\N}{\ensuremath{\mathbb{N}}}
\newcommand{\cC}{\mathcal{C}}
\newcommand{\cO}{\mathcal{O}}
\newcommand{\conj}[1]{\overline{#1}}
\newenvironment{algo}{
\begin{algorithm}[t]}{
\end{algorithm}}
\numberwithin{equation}{section}
\title{\textsc{A Newton method for harmonic mappings in the plane}}
\author{Olivier S\`{e}te\footnotemark[1] \and Jan Zur\footnotemark[1]}
\date{July 29, 2019}
\begin{document}
\maketitle
\renewcommand{\thefootnote}{\fnsymbol{footnote}}
\footnotetext[1]{TU Berlin, Department of Mathematics, MA 3-3, Stra{\ss}e des 
17. Juni 136, 10623 Berlin, Germany. \texttt{\{sete,zur\}@math.tu-berlin.de}}
\captionsetup[figure]{skip=0pt}
\begin{abstract}
We present an iterative root finding method for harmonic mappings in the 
complex plane, which is a generalization of Newton's method for analytic 
functions.
The complex formulation of the method allows an analysis in a complex variables 
spirit.
For zeros close to poles of $f = h + \conj{g}$ we construct initial 
points for which the harmonic Newton iteration is guaranteed to converge. 
Moreover, we study the number of solutions of $f(z) = \eta$ close to the 
critical set of $f$ for certain $\eta \in \C$.
We provide a Matlab implementation of the method, and
illustrate our results with several examples and numerical experiments, 
including phase plots and plots of the basins of attraction.

\end{abstract}
\paragraph*{Keywords:}
Zeros of harmonic mappings; Newton's method;
basins of attraction;
domain coloring;
Wilmshurst's conjecture; gravitational lensing.

\paragraph*{Mathematics Subject Classification (2010):}
30C55; 
30D05; 
31A05; 
37F99; 
65E05. 

\section{Introduction}

We study harmonic mappings, i.e., functions with local decomposition
\begin{equation}\label{eqn:harmonic_function_intro}
f(z) = h(z) + \conj{g(z)}
\end{equation}
in the complex plane, where $h$ and $g$ are analytic.  The function $f$ itself 
is not analytic in general, as it consists of an analytic and an anti-analytic 
term.

The modern treatment of harmonic mappings started from the landmark paper 
\cite{ClunieSheilSmall1984} of Clunie and Sheil-Small about univalent harmonic 
mappings in the plane. See also the comprehensive textbook of Duren~\cite{Duren2004}. 
While~\cite{Duren2004} considers univalent harmonic mappings, also the multivalent 
case has been intensively studied; see e.g. the collection of open problems 
\cite{BshoutyLyzzaik2010} by Bshouty and Lyzzaik. Problem~3.7 in \cite{BshoutyLyzzaik2010} 
deals with the maximum number of zeros of harmonic polynomials
$f(z) = p(z) + \conj{q(z)}$. 
Wilmshurst proved an upper bound for this number and 
conjectured another bound depending on $\deg(p)$ and $\deg(q)$~\cite{Wilmshurst1998}; 
see Section~\ref{sect:harmonic_polynomials} and the more recent 
publications concerning Wilmshurst's conjecture 
\cite{LeeLerarioLundberg2015,HauensteinLerarioLundbergMehta2015,KhavinsonLeeSaez2018}. 
Khavinson and {\'S}wi{\c{a}}tek~\cite{KhavinsonSwiatek2003}, and Geyer 
\cite{Geyer2008} 
settled the conjecture, including sharpness, for the special case $f(z) = p(z) 
- \conj{z}$.
Khavinson and Neumann \cite{KhavinsonNeumann2006} generalized these results 
to rational harmonic functions $f(z) = r(z) - \conj{z}$; see 
also~\cite{LuceSeteLiesen2014a,LiesenZur2018b}. 
The sharpness of the bound in the rational case was previously known due to the astrophysicist
Rhie~\cite{Rhie2003}. As it turns out, harmonic mappings model
gravitational lensing - an astrophysical phenomenon, where, due to deflection by massive 
objects, a light source seems brightened, distorted or multiplied for an observer. We refer 
to the expository articles \cite{KhavinsonNeumann2008,Petters2010}, and the survey 
\cite{BeneteauHudson2018}. More recent articles on harmonic mappings with applications to 
gravitational lensing are~\cite{BergweilerEremenko2010,KhavinsonLundberg2010,LuceSeteLiesen2014b,SeteLuceLiesen2015b,SeteLuceLiesen2015a,LuceSete2017,LiesenZur2018a}.

The results about the zeros of harmonic mappings published so far are more 
theoretical, and we are not aware of any specialized method to compute them, 
which may explain the lack of (numerical) examples in 
the respective publications. In this paper we focus on the numerical computation 
of zeros of harmonic mappings with a Newton method.

The global behavior of Newton's method on $\R^2$, see 
e.g.~\cite[Ch.~7]{PeitgenRichter1986}, has been less treated than the one of 
Newton's method for analytic or anti-analytic functions in $\C$.
However, the recent publications~\cite{DeLeo2018,DeLeo2019} give new impulses 
to this subject.
Since harmonic mappings are generalizations of both analytic and anti-analytic 
functions, our work may also lead to new insights on the dynamics of Newton's 
method on $\R^2$.
Our approach is not covered by the standard theory of complex 
dynamics~\cite{CarlesonGamelin1993,Milnor2006}, nor by anti-holomorphic 
dynamics, see~\cite{NakaneSchleicher2003,MukherjeeNakaneSchleicher2017}, since 
the corresponding Newton map~\eqref{eqn:newton_map} 
is neither analytic nor harmonic.

The paper is organized as follows. In Section~\ref{sect:prelim} we recall the definition and properties of 
harmonic mappings as well as Newton's method in Banach spaces. We then derive 
the harmonic Newton iteration and more generally a Newton iteration in complex 
notation for
non-analytic but real differentiable functions in Section~\ref{sect:iter}.
We investigate zeros of $f = h + \conj{g}$ close to poles 
in Section~\ref{sect:convergence}.
In particular, we construct initial points from the coefficients of the Laurent 
series of $h$ and $g$, for which the harmonic Newton iteration is guaranteed to 
converge to zeros of $f$.  In Section~\ref{sect:critical}, we study solutions 
of $f(z) = \eta$, where $z$ is close 
to a singular zero $z_0$ of $f$, and for certain small $\eta \in \C$.
In Section~\ref{sect:examples}, we consider several (numerical) examples.
Finally, we give a summary and discuss possible future research in 
Section~\ref{sect:outlook}.

\section{Mathematical background}\label{sect:prelim}

In this section we recall properties of harmonic mappings and classical 
convergence results for Newton's method.

\subsection{Harmonic mappings}

The \emph{Wirtinger derivatives} of a complex function $f$ are
\begin{equation*}
\partial_z f = \frac{1}{2} ( \partial_x f - i \partial_y f ),
\quad
\partial_{\conj{z}} f = \frac{1}{2} ( \partial_x f + i \partial_y f ),
\end{equation*}
where $z = x + iy$ with $x, y \in \R$; see~\cite[Section~1.2]{Duren2004},
\cite[Section~1.4]{Remmert1991}, or~\cite[p.~144]{Wegert2012}.
They satisfy
\begin{equation} \label{eqn:wirtinger_rules}
\partial_x f = \partial_z f + \partial_{\conj{z}} f, \quad
\partial_y f = i (\partial_z f - \partial_{\conj{z}} f), \quad
\partial_{\conj{z}} \conj{f(z)} = \conj{\partial_z f(z)}.
\end{equation}

A \emph{harmonic mapping} is a function $f : \Omega \to \C$ defined on an open 
set $\Omega \subseteq \C$ and with
\begin{equation*}
\Delta f = \partial_{xx} f + \partial_{yy} f
= 4 \partial_{\conj{z}} \partial_z f = 0.
\end{equation*}
Such a function has a local decomposition $f(z) = h(z) + \conj{g(z)}$,
where $h$ and $g$ are analytic functions of $z$ which are unique
up to an additive constant~\cite[p.~412]{DurenHengartnerLaugesen1996} 
or~\cite[p.~7]{Duren2004}.  The 
\emph{Jacobian} of a harmonic mapping $f$ at $z \in \Omega$ is
\begin{equation} \label{eqn:jacobian}
J_f(z) = \abs{\partial_z f(z)}^2 - \abs{\partial_{\conj{z}} f(z)}^2
= \abs{h'(z)}^2 - \abs{g'(z)}^2.
\end{equation}
We call $f$ \emph{sense-preserving} (or orientation-preserving) at $z \in 
\Omega$ if $J_f(z) > 0$, 
\emph{sense-reversing} if $J_f(z) < 0$, and \emph{singular} if $J_f(z) = 0$,
and similarly for zeros of $f$, e.g., $z_0$ is a singular zero of $f$ if 
$J_f(z_0) = 0$.
The points where $f$ is singular form the \emph{critical set}
\begin{equation} \label{eqn:crit}
\cC = \{ z \in \C : J_f(z) = 0 \}.
\end{equation}

One way to visualize complex functions are \emph{phase 
plots}, where the domain is colored according to the phase 
$f(z)/\abs{f(z)}$ of $f$.  We use the standard color scheme described 
in~\cite{Wegert2012} for all phase plots, and accordingly use white for 
the value $\infty$ (e.g., for poles of $h$ or $g$) and black for zeros; 
see Figure~\ref{fig:pp_intro}.
A comprehensive discussion of phase plots can be found in~\cite{Wegert2012}.
\begin{figure}
{\centering
\includegraphics[width=0.98\linewidth]{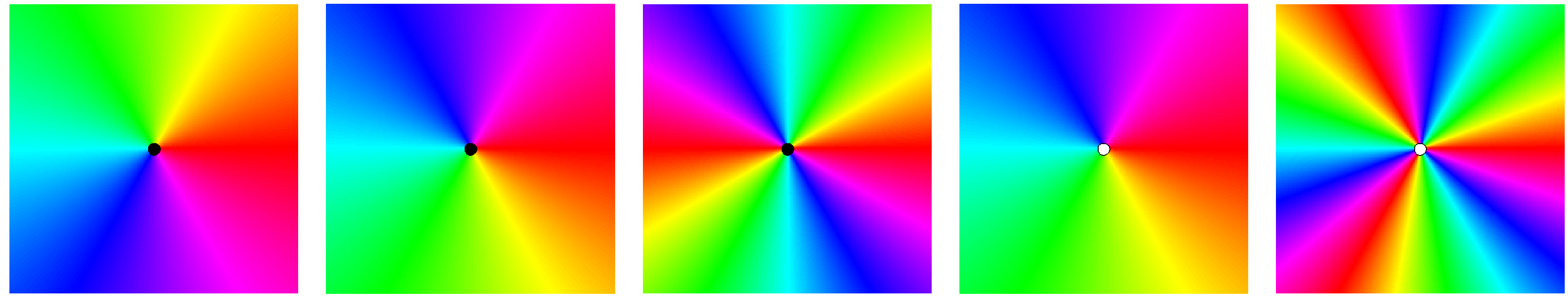}

}
\caption{Phase plots of $z$, $\conj{z}$, $z^2$, $1/z$ and $1/\conj{z}^3$ (from 
left to right).}
\label{fig:pp_intro}
\end{figure}

\begin{example} \label{ex:mpw}
Consider the rational harmonic function
\begin{equation} \label{eqn:mpw}
f(z) = h(z) + \conj{g(z)} = \frac{z^{n-1}}{z^n - r^n} - \conj{z},
\end{equation}
where $h$ has the poles $r e^{i 2 \pi k/n}$, $k = 0, 1, \ldots, n-1$.
The function $f$ has $3n+1$ zeros provided that $r > 0$ is sufficiently small; 
see~\cite{LuceSeteLiesen2014a} for a detailed analysis.
Figure~\ref{fig:mpw} shows the phase plot of $f$ for $n = 3$ and $r = 0.6$, 
the $3$ poles of $f$ and the $10$ zeros.
Depending on the orientation, the phase plot of $f$ near a zero looks 
like the phase plot of $z$ (sense-preserving) or $\conj{z}$ (sense-reversing) 
near $0$ in Figure~\ref{fig:pp_intro}.

\begin{figure}
{\centering
\includegraphics[width=0.48\linewidth]{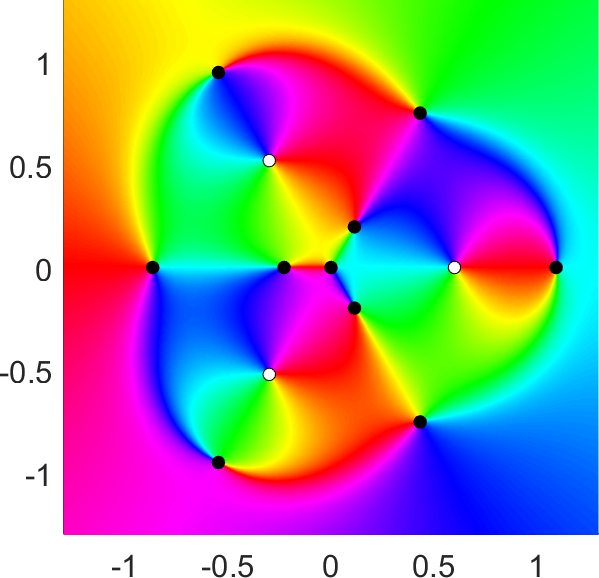}
\includegraphics[width=0.48\linewidth]{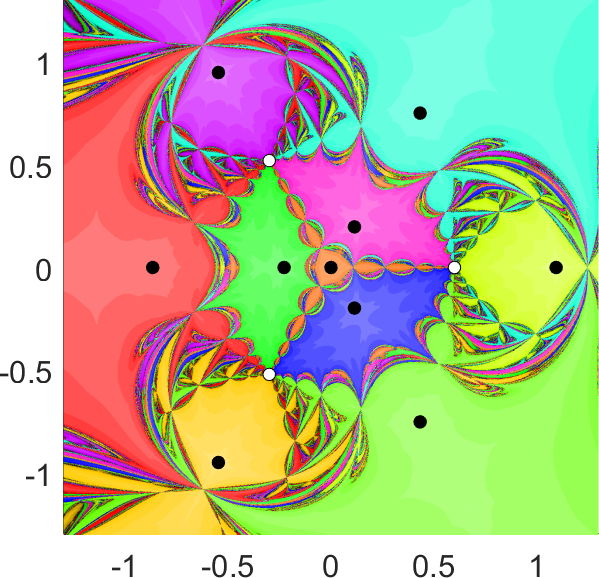}
\vspace{2mm}

}
\caption{Left: Phase plot of $f(z) = z^2 / (z^3 - 0.6^3) - \conj{z}$; see 
Example~\ref{ex:mpw}. White dots are the poles, black dots are the zeros.
Right: Basins of attraction of the zeros of $f$ in the harmonic Newton method; 
see Section~\ref{sect:iter} below.}
\label{fig:mpw}
\end{figure}
\end{example}

\subsection{Newton's method in Banach spaces}
\label{sect:classical_newton}

Let $F : D \to Y$, $D \subseteq X$ open, be a continuously Fr\'echet 
differentiable map
between two Banach spaces $X$, $Y$.  The Newton iteration with initial point 
$x_0 \in D$ is
\begin{equation} \label{eqn:classical_newton}
x_{k+1} = x_k - F'(x_k)^{-1} F(x_k), \quad k \geq 0.
\end{equation}
Under some regularity conditions,
the sequence of Newton iterates $(x_k)_k$ is quadratically convergent
($\norm{x_{k+1} - x_*} \leq \text{const} \cdot \norm{x_k - x_*}^2$), if $x_0$ 
is sufficiently close to a zero of $F$; see e.g.~\cite[Prop.~5.1]{Zeidler1986}.
The next two theorems quantify ``sufficiently close''.

The Newton-Kantorovich theorem guarantees existence of a zero close to the 
initial point of the Newton iteration.
In the following, $B(x_0; r)$ and $\overline{B}(x_0; r)$ denote the open 
and closed balls with center $x_0$ and radius $r$.

\begin{theorem}[Newton-Kantorovich, {\cite[Thm.~2.1]{Deuflhard2011}}] 
\label{thm:kantorovich}
Let $F : D \to Y$ be a continuously Fr\'echet differentiable map with $D 
\subseteq X$ open and convex.  For an initial point $x_0 \in D$ let $F'(x_0)$ 
be invertible.  Suppose that
\begin{align*}
\norm{F'(x_0)^{-1} F(x_0)} &\leq \alpha, \\
\norm{F'(x_0)^{-1} (F'(y)-F'(x))} &\leq \omega_0 \norm{y-x} \quad \text{for all 
} x, y \in D.
\end{align*}
Let $h_0 = \alpha \omega_0$ and $\rho = (1 - \sqrt{1 - 2 h_0})/\omega_0$, and 
suppose that
\begin{align*}
h_0 \leq \frac{1}{2} \quad \text{and} \quad
\overline{B}(x_0; \rho) \subseteq D.
\end{align*}
Then the sequence $(x_k)_k$ of Newton iterates is well defined, remains in 
\linebreak
$\overline{B}(x_0; \rho)$, and converges to some $x_*$ with $F(x_*) = 0$.
If $h_0 < \frac{1}{2}$, then the convergence is quadratic.
\end{theorem}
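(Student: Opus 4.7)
The plan is to use the classical Kantorovich majorant technique, adapted to the affine-covariant setting of the hypothesis. First, I would identify $\rho$ as the smaller root of the scalar quadratic majorant
\begin{equation*}
p(t) = \tfrac{\omega_0}{2} t^2 - t + \alpha,
\end{equation*}
whose roots $(1 \pm \sqrt{1 - 2 h_0})/\omega_0$ are real precisely because $h_0 \le 1/2$. Applying the real Newton iteration to $p$ starting at $t_0 = 0$ produces a monotone increasing sequence $t_0 < t_1 < \dots \nearrow \rho$, and the increments $t_{k+1} - t_k$ will serve as majorants for the norms of the Banach-space Newton steps.

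Next, I would run a simultaneous induction on $k$, showing at each stage that $x_k$ is well defined, that $\|x_k - x_0\| \le t_k$, and that $\|x_{k+1} - x_k\| \le t_{k+1} - t_k$. For the invertibility of $F'(x_k)$ the Lipschitz hypothesis gives
\begin{equation*}
\|F'(x_0)^{-1}(F'(x_k) - F'(x_0))\| \le \omega_0 \|x_k - x_0\| \le \omega_0 t_k < 1,
\end{equation*}
so the Banach perturbation lemma yields $\|F'(x_k)^{-1} F'(x_0)\| \le (1 - \omega_0 t_k)^{-1}$. The quadratic residual bound then comes from the integral remainder
\begin{equation*}
F(x_{k+1}) = \int_0^1 \bigl(F'(x_k + s(x_{k+1}-x_k)) - F'(x_k)\bigr)(x_{k+1}-x_k)\, ds,
\end{equation*}
which, after premultiplying by $F'(x_0)^{-1}$ and using the affine-covariant Lipschitz hypothesis once more, gives $\|F'(x_0)^{-1} F(x_{k+1})\| \le \tfrac{\omega_0}{2} \|x_{k+1} - x_k\|^2$. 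Combining the two estimates reproduces exactly the recursion satisfied by the scalar majorant,
\begin{equation*}
\|x_{k+2} - x_{k+1}\| \le \frac{\omega_0 \|x_{k+1}-x_k\|^2 / 2}{1 - \omega_0 \|x_{k+1}-x_0\|} \le t_{k+2} - t_{k+1},
\end{equation*}
which closes the induction and keeps the iterates inside $\overline{B}(x_0;\rho) \subseteq D$.

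Finally, since the telescoping sum $\sum_k (t_{k+1} - t_k) = \rho < \infty$, the sequence $(x_k)$ is Cauchy in the closed ball, hence converges to some $x_* \in \overline{B}(x_0; \rho)$. Passing to the limit in the residual bound $\|F'(x_0)^{-1} F(x_k)\| \to 0$ and invoking continuity of $F$ gives $F(x_*) = 0$. When $h_0 < 1/2$ the quantity $1 - \omega_0 \rho = \sqrt{1 - 2 h_0}$ is strictly positive, so the denominator in the recursion stays bounded away from zero and the quadratic convergence rate is immediate from the $\|x_{k+1}-x_k\|^2$ in the numerator. The main obstacle I anticipate is the careful bookkeeping of the simultaneous induction, and in particular matching the scalar Newton recursion on $p$ to the Banach-space one; this requires the affine-covariant form of the hypothesis (rather than the more common pair of separate bounds on $\|F'(x)^{-1}\|$ and $\|F'(y) - F'(x)\|$), and is the place where an imprecise estimate would spoil the sharp radius $\rho$.
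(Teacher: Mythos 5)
This statement is not proved in the paper at all: it is quoted, in its affine-covariant form, directly from \cite{Deuflhard2011}, so there is no internal proof to compare against. Your sketch is the standard Kantorovich majorant argument and is essentially the proof strategy of the cited source: identifying $\rho$ as the smaller root of $p(t)=\tfrac{\omega_0}{2}t^2-t+\alpha$, majorizing the Banach-space steps by the scalar Newton increments $t_{k+1}-t_k$, using the Banach perturbation lemma in the affine-covariant form to control $\norm{F'(x_k)^{-1}F'(x_0)}$, and the integral-remainder estimate for the new residual are exactly the right ingredients, and the induction closes as you describe. Two minor points deserve one more line each in a full write-up: (i) from $\norm{F'(x_0)^{-1}F(x_k)}\to 0$ you conclude $F(x_*)=0$ by continuity \emph{together with} the invertibility of $F'(x_0)$, which is what turns $F'(x_0)^{-1}F(x_*)=0$ into $F(x_*)=0$; (ii) quadratic convergence of the errors $\norm{x_k-x_*}$ is not literally immediate from the quadratic decay of the step lengths --- either sum the tail $\norm{x_k-x_*}\le\sum_{j\ge k}\norm{x_{j+1}-x_j}$ and compare with the scalar majorants, or, more directly, expand $F$ at $x_*$ around $x_k$ and use $\norm{F'(x_k)^{-1}F'(x_0)}\le(1-\omega_0\rho)^{-1}=(1-2h_0)^{-1/2}$ to obtain $\norm{x_{k+1}-x_*}\le\tfrac{\omega_0}{2\sqrt{1-2h_0}}\norm{x_k-x_*}^2$. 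With those two routine additions your outline is a complete and correct proof of the quoted theorem.
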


The refined Newton-Mysovskii theorem guarantees uniqueness of a zero, but not 
its 
existence.  Given a zero, it quantifies a neighborhood in which the Newton 
iteration will converge, as well as a constant for the quadratic convergence 
estimate.

\begin{theorem}[refined Newton-Mysovskii theorem, 
{\cite[Thm.~2.3]{Deuflhard2011}}]
\label{thm:mysovskii}
Let $F : D \to \R^n$ be a continuously differentiable map with $D \subseteq 
\R^n$ open and convex.  Suppose that $F'(x)$ is invertible for each $x \in D$.  
Suppose that
\begin{equation} \label{eqn:omega_mysovskii}
\norm{ F'(x)^{-1} (F'(y) - F'(x)) (y-x)} \leq \omega \norm{y-x}^2 \quad 
\text{for all } x, y \in D.
\end{equation}
Let $F(x) = 0$ have a solution $x_* \in D$.
For the initial point $x_0$ suppose that $\conj{B}(x_*; \norm{x_0 - x_*}) 
\subseteq D$ and that
\begin{equation*}
\norm{x_0 - x_*} < \frac{2}{\omega}.
\end{equation*}
Then the sequence of Newton iterates $(x_k)_k$ is well defined,
remains in the open ball $B(x_*; \norm{x_0 - x_*})$, converges to $x_*$, and
fulfills
\begin{equation*}
\norm{x_{k+1} - x_*} \leq \frac{\omega}{2} \norm{x_k - x_*}^2.
\end{equation*}
Moreover, the solution $x_*$ is unique in $B(x_*; 2/\omega)$.
\end{theorem}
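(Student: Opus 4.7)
The plan is to prove the quadratic estimate $\norm{x_{k+1} - x_*} \leq \tfrac{\omega}{2}\norm{x_k - x_*}^2$ first, and then derive well-definedness, invariance of the ball, convergence, and uniqueness as consequences. The starting point is the identity
\begin{equation*}
x_{k+1} - x_* = -F'(x_k)^{-1}\bigl[F(x_k) - F(x_*) - F'(x_k)(x_k - x_*)\bigr],
\end{equation*}
into which I insert the fundamental theorem of calculus along the segment from $x_*$ to $x_k$, which lies in $D$ by convexity, to obtain
\begin{equation*}
x_{k+1} - x_* = F'(x_k)^{-1}\int_0^1 \bigl[F'(x_k) - F'(x_* + t(x_k - x_*))\bigr](x_k - x_*)\, dt.
\end{equation*}
To match this integrand to the hypothesis~\eqref{eqn:omega_mysovskii}, I take $x = x_k$ and $y = x_* + t(x_k - x_*)$, so that $y - x = -(1-t)(x_k - x_*)$. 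Extracting the scalar factor $1-t$, the hypothesis bounds the integrand pointwise by $\omega(1-t)\norm{x_k - x_*}^2$, and integrating over $t \in \cc*{0,1}$ gives the quadratic estimate.

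Given the estimate, the rest is essentially bookkeeping. Since $\tfrac{\omega}{2}\norm{x_0 - x_*} < 1$, a straightforward induction yields $\norm{x_{k+1} - x_*} < \norm{x_k - x_*}$, so the iterates never leave $\overline{B}(x_*; \norm{x_0 - x_*}) \subseteq D$ and $F'(x_k)$ is always invertible by assumption; the geometrically decreasing bound then forces $x_k \to x_*$.

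For uniqueness in $B(x_*; 2/\omega)$, suppose $x_{**}$ is a second zero. Writing $0 = F(x_{**}) - F(x_*)$ via the fundamental theorem of calculus along the (convex) segment in $D$ and applying the same rescaling argument, now with $x = x_*$ and $y = x_* + t(x_{**} - x_*)$, I arrive at $\norm{x_{**} - x_*} \leq \tfrac{\omega}{2}\norm{x_{**} - x_*}^2$; combined with $\norm{x_{**} - x_*} < 2/\omega$, this forces $x_{**} = x_*$. The main obstacle throughout is precisely this rescaling step: the affine-covariant condition~\eqref{eqn:omega_mysovskii} attaches $F'(x)^{-1}$ to a specific base point and only bounds the action on the displacement $y - x$, so each time I must parametrize the segment so that both the base point of the inverse and the multiplying vector match exactly, up to a scalar factor that can be pulled out.
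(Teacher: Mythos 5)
Your argument is correct, but note that the paper does not prove this theorem at all: it is imported verbatim from Deuflhard's book, so there is no in-paper proof to compare against. Your route — the identity $x_{k+1}-x_*=F'(x_k)^{-1}\int_0^1\bigl[F'(x_k)-F'(x_*+t(x_k-x_*))\bigr](x_k-x_*)\,dt$, the rescaling of the affine-covariant condition \eqref{eqn:omega_mysovskii} by the factors $1-t$ (resp.\ $t$ for uniqueness), and the induction using $\tfrac{\omega}{2}\norm{x_0-x_*}<1$ — is essentially the standard proof given in that reference.
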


We frequently use Landau's $\cO$-notation for complex functions.
By $f(z) + \cO(z^k)$ we mean an 
expression $f(z) + \psi(z)$, where $\psi(z) \in \cO(z^k)$, i.e., 
$\abs{\psi(z)/z^k}$ is 
bounded from above by a constant (usually for $z \to 0$ or $z \to \infty$, 
depending on the context).

\begin{remark}
Using the Taylor expansion $\sqrt{1 + z} = 1 + \frac{z}{2} + \cO(z^2)$, we find 
the  asymptotic expression
\begin{equation} \label{eqn:rho_minus_asympt}
\rho = \frac{1 - \sqrt{1 - 2 h_0}}{\omega_0}
= \frac{1 - (1 - h_0 + \cO(h_0^2))}{\omega_0}
= \alpha + \cO(\alpha h_0),
\end{equation}
in Theorem~\ref{thm:kantorovich},
whenever the \emph{Kantorovich quantity} $h_0$ is sufficiently small.
\end{remark}

While we only require the above results, other convergence results for Newton's 
method in Banach spaces exist; see e.g.~\cite{Smale1986,Wang1999}.

\section{The harmonic Newton method}\label{sect:iter}

We derive the harmonic Newton iteration and discuss its implementation.

\subsection{The Newton iteration in the plane in complex notation}
\label{subsect:iter}

Let $f : \Omega \to \C$ be (continuously) differentiable with respect to $x = 
\re(z)$ and $y = 
\im(z)$, where we do not require for the moment that $f$ is analytic or 
harmonic.
We identify $\C$ with $\R^2$, and $f$ with
\begin{equation} \label{eqn:F}
F : \Omega \to \R^2, \quad
F(z) = \begin{bmatrix} \re(f(z)) \\ \im(f(z)) \end{bmatrix}, \quad z = x + iy,
\end{equation}
which is a smooth function of $x$ and $y$.

Since $\re$ and $\im$ are 
$\R$-linear and continuous, they commute with $\partial_x$ and~$\partial_y$.  
Thus, $F'(z)$ has the matrix representation
\begin{equation}
F'(z) 
= \begin{bmatrix}
\re(\partial_z f(z)) + \re(\partial_{\conj{z}} f(z)) &
- \im(\partial_z f(z)) + \im(\partial_{\conj{z}} f(z)) \\
\im(\partial_z f(z)) + \im(\partial_{\conj{z}} f(z)) &
\re(\partial_z f(z)) - \re(\partial_{\conj{z}} f(z))
\end{bmatrix}, \label{eqn:dF}
\end{equation}
where we used the Wirtinger derivatives~\eqref{eqn:wirtinger_rules}.
The inverse of $F'(z)$ is
\begin{equation*}
F'(z)^{-1}
= \frac{1}{J_f(z)}
\begin{bmatrix}
\re(\partial_z f(z)) - \re(\partial_{\conj{z}} f(z)) &
\im(\partial_z f(z)) - \im(\partial_{\conj{z}} f(z)) \\
-\im(\partial_z f(z)) - \im(\partial_{\conj{z}} f(z)) &
\re(\partial_z f(z)) + \re(\partial_{\conj{z}} f(z))
\end{bmatrix},
\end{equation*}
provided $J_f(z) = \abs{\partial_z f(z)}^2 - \abs{\partial_{\conj{z}} f(z)}^2 
\neq 0$; 
compare~\eqref{eqn:jacobian}.  We identify $F'(z)$ and its inverse with the
$\R$-linear maps on $\C$
\begin{align*}
F'(z)(w) &= (\partial_z f(z)) w + (\partial_{\conj{z}} f(z)) \conj{w}, \\
F'(z)^{-1}(w) &= \frac{1}{J_f(z)} ( \conj{\partial_z f(z)} w - 
(\partial_{\conj{z}} f(z)) \conj{w}).
\end{align*}
Then the Newton iteration~\eqref{eqn:classical_newton} for $F$ can be rewritten 
in $\C$ as
\begin{equation} \label{eqn:complex_newton}
z_{k+1} = z_k - \frac{\conj{\partial_z f(z_k)} f(z_k) - \partial_{\conj{z}} 
f(z_k) \conj{f(z_k)}}{\abs{\partial_z f(z_k)}^2 - \abs{\partial_{\conj{z}} 
f(z_k)}^2}, \quad k \geq 0.
\end{equation}

\subsection{The harmonic Newton iteration}

For a harmonic mapping with local decomposition $f = h + \conj{g}$, we have
$\partial_z f(z) = h'(z)$ and $\partial_{\conj{z}} f(z) = 
\partial_{\conj{z}} \conj{g(z)} = \conj{g'(z)}$
with~\eqref{eqn:wirtinger_rules},
so that $F'(z)$ and $F'(z)^{-1}$ can be identified with
\begin{align}
F'(z)(w) &= h'(z) w + \conj{g'(z) w}, \label{eqn:Fp} \\
F'(z)^{-1}(w) &= \frac{1}{J_f(z)} ( \conj{h'(z)} w - \conj{g'(z) w}),
\label{eqn:Fpinv}
\end{align}
and the Newton iteration~\eqref{eqn:complex_newton} takes the form
\begin{equation} \label{eqn:harmonic_newton}
z_{k+1} = z_k - \frac{\conj{h'(z_k)} f(z_k) - \conj{g'(z_k) 
f(z_k)}}{\abs{h'(z_k)}^2 - \abs{g'(z_k)}^2}, \quad k \geq 0,
\end{equation}
which we call the \emph{harmonic Newton iteration}.
It reduces to the classical Newton iteration $z_{k+1} = z_k - f(z_k)/f'(z_k)$ 
when $f$ is analytic ($f = h$).  Similarly, if $f$ is anti-analytic
($f = \conj{g}$), the iteration simplifies to $z_{k+1} = z_k - g(z_k)/g'(z_k)$.
Hence, the harmonic Newton iteration~\eqref{eqn:harmonic_newton} is a 
natural generalization of the classical Newton iteration.

We define the \emph{harmonic Newton map}
\begin{equation} \label{eqn:newton_map}
H_f : \Omega \setminus \cC \to \C, \quad
z \mapsto H_f(z) = z - \frac{\conj{h'(z)} f(z) - \conj{g'(z) 
f(z)}}{\abs{h'(z)}^2 - \abs{g'(z)}^2},
\end{equation}
which is in general neither analytic nor harmonic.  Note that $z_* \in \Omega 
\setminus \cC$ is a fixed point of $H_f$ if and only if $z_*$ is a 
non-singular zero of $f$.
Borrowing notation from complex 
dynamics, see e.g.~\cite{CarlesonGamelin1993,Milnor2006}, we define the 
\emph{basin of attraction} of a zero $z_*$ of $f$ as
\begin{equation*}
A(z_*) = \{z \in \C : \lim_{k \to \infty} H_f^k(z) = z_* \},
\end{equation*}
where $H_f^1 = H_f$ and $H_f^k = H_f \circ H_f^{k-1}$ for $k \geq 2$.
Note that the basin of attraction of a non-singular zero $z_*$ contains an open 
neighborhood of $z_*$, by the local convergence of Newton's method.
For singular zeros this need not be the case; see Example~\ref{ex:einstein}.

To visualize the dynamics of the harmonic Newton map, we use the following 
standard domain coloring technique; see, e.g.,~\cite{Gilbert2001,Varona2002}.
We color every point according to which basin it belongs to.
The color level indicates the number of iterations, the darker the more 
iterations were required.
Points where the harmonic Newton iteration does not converge 
are colored in black; see Figure~\ref{fig:mpw}.

\subsection{Implementation}

\begin{algo}
\caption{\textsc{Harmonic Newton method}}\label{alg:HNM}
\textbf{Input:} $f$, $h'$, $g'$, and $z_0 \in \C$, $N \in \N$, $\eps_f, \eps_z 
> 0$ \\
\textbf{Output:} approximated zero of $f$
\begin{algorithmic}[1]
\For{$k = 0, \dots, N-1$}
\State $\displaystyle z_{k+1} = z_k - \frac{\conj{h'(z_k)} f(z_k) - 
\conj{g'(z_k)
f(z_k)}}{\abs{h'(z_k)}^2 - \abs{g'(z_k)}^2}$
\If {$\abs{f(z_{k+1})} < \eps_f$ \textbf{or} $\abs{z_{k+1} - z_k} < \eps_z 
\abs{z_{k+1}}$}
\State \textbf{return} $z_{k+1}$
\EndIf
\EndFor
\State \textbf{return} $z_{N}$
\end{algorithmic}
\end{algo}

Following Higham~\cite[Sect.~25.5]{Higham2002}, we stop the harmonic Newton 
iteration when the residual $\abs{f(z_k)}$ 
or the relative difference between two iterates $\abs{z_{k+1} - 
z_k}/\abs{z_{k+1}}$ are less than given tolerances, 
or when a prescribed maximum number of steps has been performed.
This gives the \emph{harmonic Newton method}; see Algorithm~\ref{alg:HNM}.
Note that, when $f(z_*) = 0$ and $\abs{z_k - z_*}$ is sufficiently small,
we have $\abs{z_{k+1} - z_*} \leq 2 c \abs{z_{k+1} - z_k}^2$, where $c$ is the 
constant from the (local) quadratic convergence of the Newton method; 
see~\cite[Sect.~25.5]{Higham2002}.

Our MATLAB implementation of Algorithm~\ref{alg:HNM} is displayed in 
Figure~\ref{fig:code}.
To apply a harmonic Newton step not only to a single point, but to a vector 
or matrix of points, we vectorize the iteration~\eqref{eqn:harmonic_newton}, 
see lines 25--26.  This also requires vectorized function handles for 
$f$, $h'$, $g'$. 
Vectorization is particularly useful when we have several initial points,
since applying the method to a matrix of 
initial points is usually faster than applying it to each point individually.
When all points are iterated 
simultaneously, some might already have converged while others have 
not.  However, further iterating a point that has numerically converged can be 
harmful in finite precision, e.g., when a point is close to the critical set, a 
further
Newton step may lead to division by zero.
Hence, we stop the iteration individually for each point, 
by removing them from the list of active points; see lines 15 and 33.
\begin{figure}[t!]
\footnotesize
\begin{lstlisting}
function [z, numiter] = harmonicNewton(f, dh, dg, z, maxit, restol, steptol)

% Set default values:
if ((nargin < 5) || isempty(maxit)), maxit = 50; end
if ((nargin < 6) || isempty(restol)), restol = 1e-14; end
if ((nargin < 7) || isempty(steptol)), steptol = 1e-14; end
% Setup
active = 1:numel(z);                    % track which points to iterate
numiter = (maxit+1)*ones(size(z));      % number of iterations
% Check initial points:
fz = f(z);
converged = (abs(fz) < restol);         % already converged
diverged = (isnan(fz) | isinf(fz));     % NaN or Inf
numiter(converged) = 0;                 % no Newton steps required
active(converged | diverged) = [];      % do not further iterate
fz = fz(active);
% Harmonic Newton iteration:
for kk = 1:maxit
    if (isempty(active)), return, end   % every point has converged
    zold = z(active);
    % Evaluate functions:
    dhz = dh(zold);
    dgz = dg(zold);
    % Newton step:
    z(active) = zold ...
        - (conj(dhz).*fz - conj(dgz.*fz))./(abs(dhz).^2 - abs(dgz).^2);  
    % Convergence check:
    fz = f(z(active));
    converged = (abs(fz) < restol) | ...
        (abs(z(active) - zold) < steptol*abs(zold));
    diverged = (isnan(fz) | isinf(fz)); % NaN or inf
    numiter(active(converged)) = kk;    % took kk iterations
    active(converged | diverged) = [];  % remove converged points
    fz(converged | diverged) = [];      % remove converged points
end
end
\end{lstlisting}
\normalsize
\caption{MATLAB implementation of the harmonic Newton method used in the 
examples.}
\label{fig:code}
\end{figure}
Note that computing the next iterate in~\eqref{eqn:harmonic_newton} requires 
only the three function evaluations $f(z_k)$, $h'(z_k)$ and $g'(z_k)$.

Computing the next iterate by~\eqref{eqn:harmonic_newton} or by solving the 
$2 \times 2$ real linear algebraic system corresponding to
\begin{equation} \label{eqn:linsys}
F'(z_k) (z_{k+1} - z_k) = - F(z_k)
\end{equation}
with~\eqref{eqn:dF} are equivalent in exact arithmetic.
This is not the case in finite precision, where the Jacobian $J_f(z)$ can be 
numerically zero, although $z$ is not on the critical set.  To avoid division 
by zero, it is preferable to compute the next iterate by 
solving~\eqref{eqn:linsys}, instead of inverting $F'(z_k)$ explicitly.
However, we use~\eqref{eqn:harmonic_newton} in our numerical experiments 
(except in Example~\ref{ex:tan}), see line~2 in Algorithm~\ref{alg:HNM} and 
lines 25--26 in Figure~\ref{fig:code}, because~\eqref{eqn:harmonic_newton} 
can be easily vectorized.

\section{Finding zeros close to poles}
\label{sect:convergence}

The complex formulation~\eqref{eqn:harmonic_newton} makes the harmonic Newton 
iteration ame\-nable to analysis in a complex variables spirit.
We investigate zeros of harmonic mappings $f = h + \conj{g}$ close to poles 
$z_0$, i.e., points where $\lim_{z \to z_0} \abs{f(z)} = \infty$; 
see~\cite[Def.~2.1]{SuffridgeThompson2000}.
More precisely, we construct initial points from the Laurent coefficients of 
$h$ and $g$, for which the harmonic Newton 
iteration is guaranteed to converge to zeros of~$f$.
We start with an example.

\begin{example} \label{ex:tan}
We consider $f(z) = \tan(z) - \conj{z}$; see Figure~\ref{fig:tan}.
\begin{figure}[t!]
{\centering
\includegraphics[width=0.9\linewidth]{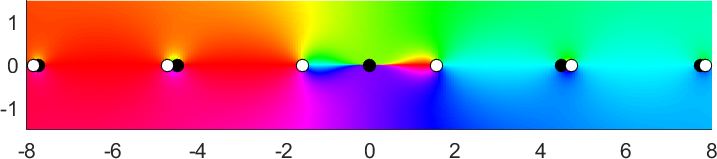}
\vspace{2mm}
\includegraphics[width=0.9\linewidth]{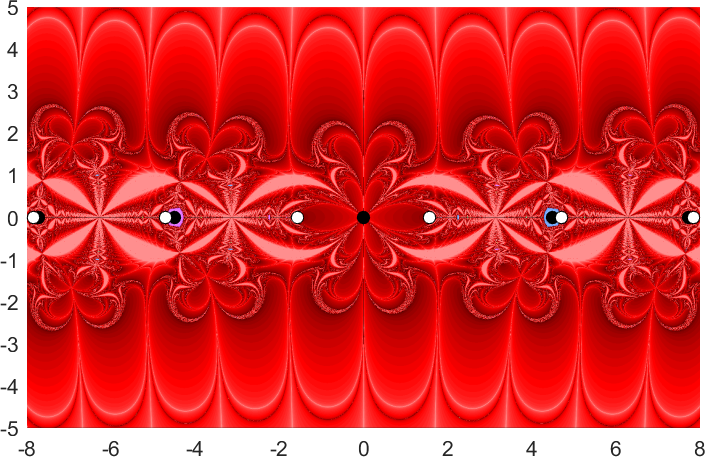}
\vspace{2mm}

}
\caption{The function $f(z) = \tan(z) - \conj{z}$ from Example~\ref{ex:tan}. 
Top: Phase plot, zeros (black dots), poles (white dots). Bottom: Basins of 
attraction.}
\label{fig:tan}
\end{figure}
The function has a singular zero at the origin, and one (real) zero close to 
each pole of the tangent, except $\pm \frac{\pi}{2}$.
To compute the zeros of $f$ we apply the harmonic Newton method 
to a grid of initial points in $\cc{-8, 8} \times \cc{-2, 2}$ with mesh size 
$0.2$, tolerances $10^{-14}$, and with up to $50$ steps.
The residual at our computed zeros satisfies $\abs{f(z_k)} \leq 1.5600 \cdot 
10^{-13}$.

Since the Jacobian  $J_f(z) = \abs{1 + \tan(z)^2}^2 - 1$ is numerically zero 
for $\abs{z} \leq 10^{-9}$, we use~\eqref{eqn:linsys} instead 
of~\eqref{eqn:harmonic_newton} to compute the basins of attraction in this 
example.
We observe that the basin of $0$ is huge, while the basins of zeros close to 
poles are small.  Moreover, the size of the basins decreases for larger poles, 
and the poles seem to be on the boundary of the basins.

Note that the black points on the imaginary axis are due to a numerically 
vanishing Jacobian, as above.  Analytically, the harmonic Newton map is
\begin{equation*}
H_f(iy) = i \left( y - \frac{y + \tanh(y)}{1 + \sech(y)^2} \right)
\quad \text{for } y \in \R,
\end{equation*}
and one can prove that every point on 
the imaginary axis converges to $0$.
\end{example}

Next, we construct initial points for which the harmonic Newton iteration 
converges to zeros close to a pole.  For this, we need the following lemma.

\begin{lemma} \label{lem:deg_one}
The equation $a z + \conj{b z} = c$ has a unique solution if and only if 
$\abs{a} \neq \abs{b}$.  In that case, the unique solution is
$z = \frac{\conj{a} c - \conj{b c}}{\abs{a}^2 - \abs{b}^2}$.
\end{lemma}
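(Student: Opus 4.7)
The plan is to view the equation as an $\R$-linear equation on $\C \cong \R^2$ via the map $L : \C \to \C$, $L(z) \coloneq az + \conj{bz} = az + \conj{b}\conj{z}$. This is precisely the $\R$-linear map $F'(\cdot)$ of \eqref{eqn:Fp} with the constant derivatives $h' \equiv a$ and $g' \equiv b$, and its determinant as a map $\R^2 \to \R^2$ equals the Jacobian $\abs{a}^2 - \abs{b}^2$ by \eqref{eqn:jacobian}. Hence $L$ is a bijection of $\C$ if and only if $\abs{a} \neq \abs{b}$, which already settles the equivalence in the lemma and the uniqueness half of the claim.

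For the explicit formula I would take the complex conjugate of the original equation to obtain the companion equation $bz + \conj{a}\conj{z} = \conj{c}$. Treating $z$ and $\conj{z}$ as formally independent unknowns, this yields a $2 \times 2$ linear system with coefficient determinant $\abs{a}^2 - \abs{b}^2$. Applying Cramer's rule---equivalently, substituting $h' \equiv a$, $g' \equiv b$, and $w = c$ in \eqref{eqn:Fpinv}---gives $z = (\conj{a}c - \conj{bc})/(\abs{a}^2 - \abs{b}^2)$, and a short check shows that the second coordinate returned by Cramer's rule is indeed $\conj{z}$, so the system is internally consistent and the candidate really solves the original equation.

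For the ``only if'' direction I would argue by contrapositive: if $\abs{a} = \abs{b}$, then $L$ has non-trivial $\R$-linear kernel, so the solution set of $L(z) = c$ is either empty or an entire real affine line in $\C$, and in particular never a singleton. There is no substantial obstacle here: the whole lemma is essentially a restatement of the invertibility of $F'$ from Section~\ref{sect:iter} specialized to constant derivatives, and the explicit inverse has already been spelled out in \eqref{eqn:Fpinv}.
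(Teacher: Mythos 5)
Your proposal is correct and follows essentially the same route as the paper: both reduce the equation to a $2\times 2$ linear system whose determinant is $\abs{a}^2 - \abs{b}^2$, so that unique solvability is equivalent to $\abs{a} \neq \abs{b}$ and the explicit formula follows by inverting the system. The only cosmetic difference is that the paper splits into real and imaginary parts, while you work in the conjugate coordinates $(z,\conj{z})$ and reuse \eqref{eqn:Fp}--\eqref{eqn:Fpinv}; your check that the second Cramer coordinate is indeed $\conj{z}$ correctly closes the one subtlety of that variant.
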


\begin{proof}
Splitting $a z + \conj{b z} = c$ in its real and imaginary part yields a real 
$2 \times 2$ linear algebraic system with determinant $\abs{a}^2 - \abs{b}^2$, 
from which we obtain the assertion.
\end{proof}

To apply the Newton-Kantorovich theorem to the harmonic Newton 
iteration~\eqref{eqn:harmonic_newton}, we identify $\C$ with $\R^2$ as in 
Section~\ref{subsect:iter}.
First, we derive bounds for the constants $\alpha$ and $\omega_0$ in 
Theorem~\ref{thm:kantorovich} using the formulas for $F'$ and 
$(F')^{-1}$ from~\eqref{eqn:Fp} and~\eqref{eqn:Fpinv}.
We find
\begin{equation*}
\abs{F'(z_0)^{-1} (w)}
\leq \frac{\abs{h'(z_0)} + \abs{g'(z_0)}}{\abs[\big]{\abs{h'(z_0)}^2 - 
\abs{g'(z_0)}^2}} \abs{w}
= \frac{\abs{w}}{\abs[\big]{\abs{h'(z_0)} - \abs{g'(z_0)}}},
\end{equation*}
and so
\begin{equation}
\abs{F'(z_0)^{-1} (F(z_0))} \leq \frac{\abs{f(z_0)}}{\abs[\big]{\abs{h'(z_0)} - 
\abs{g'(z_0)}}} \leq \alpha
\label{eqn:alpha}
\end{equation}
and
\begin{align}
\norm{F'(z_0)^{-1} (F'(y)-F'(x))}
&= \max_{\abs{z} = 1} \abs{F'(z_0)^{-1} (F'(y)-F'(x)) (z) } \nonumber \\
&\leq \frac{\max_{\abs{z} = 1} (\abs{h'(y) - h'(x)} + \abs{g'(y) - g'(x)}) 
\abs{z}}{\abs[\big]{\abs{h'(z_0)} - \abs{g'(z_0)}}} \nonumber \\
&\leq \frac{\sup_{t \in D} \abs{h''(t)} + \sup_{t \in D} 
\abs{g''(t)}}{\abs[\big]{\abs{h'(z_0)} - \abs{g'(z_0)}}} \abs{y-x} \nonumber \\
&\leq \omega_0 \abs{y-x}. \label{eqn:omega0}
\end{align}

Next we present the main result of this section.

\begin{theorem} \label{thm:zero_at_pole}
Let $f = h + \conj{g}$, where
\begin{equation*}
h(z) = \sum_{k=-n}^\infty a_k (z-z_0)^k, \quad
g(z) = \sum_{k=-n}^\infty b_k (z-z_0)^k,
\end{equation*}
with $n \geq 1$, and $\abs{a_{-n}} \neq \abs{b_{-n}}$.
Suppose that $c = - (a_0 + \conj{b}_0) \neq 0$, and
let $z_1, \ldots, z_n$ be the $n$ solutions of
\begin{equation} \label{eqn:init_at_pole}
(z - z_0)^n = \frac{\abs{a_{-n}}^2 - \abs{b_{-n}}^2}{\conj{a}_{-n} c - 
\conj{b}_{-n} \conj{c}}.
\end{equation}
We then have for sufficiently large $\abs{c}$:
\begin{enumerate}
\item There exist $n$ distinct zeros of $f$ near $z_0$.
\item The zeros in 1.\ are the limits of the harmonic Newton iteration with 
initial points $z_1, \ldots, z_n$.
\end{enumerate}
\end{theorem}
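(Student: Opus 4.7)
The plan is to verify the hypotheses of the Newton-Kantorovich theorem (Theorem~\ref{thm:kantorovich}) at each initial point $z_i$, which simultaneously produces a nearby zero of $f$ and guarantees convergence of the harmonic Newton iteration starting from $z_i$. Setting $w_i \coloneq z_i - z_0$, the defining equation~\eqref{eqn:init_at_pole} combined with Lemma~\ref{lem:deg_one} gives the exact identity
\begin{equation*}
a_{-n} w_i^{-n} + \conj{b_{-n} w_i^{-n}} = c = -(a_0 + \conj{b}_0),
\end{equation*}
so that the $z_i$ cluster at $z_0$ as $\abs{c}\to\infty$, with $\abs{w_i}$ of order $\abs{c}^{-1/n}$. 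For $\abs{c}$ sufficiently large each $z_i$ lies inside the punctured convergence domain of the Laurent series of $h$ and $g$.

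I would then estimate the Kantorovich constants $\alpha$ and $\omega_0$ via~\eqref{eqn:alpha} and~\eqref{eqn:omega0}. Substituting the Laurent series into $f = h + \conj{g}$, the leading term and the constant cancel by the identity above, leaving
\begin{equation*}
f(z_i) = \sum_{k=-n+1}^{-1} \bigl(a_k w_i^k + \conj{b_k w_i^k}\bigr) + \cO(1),
\end{equation*}
so that $\abs{f(z_i)} = \cO(\abs{w_i}^{-n+1}) = \cO(\abs{c}^{(n-1)/n})$. From the Laurent expansions, $h'(z_i) = -n a_{-n} w_i^{-n-1}(1 + \cO(\abs{w_i}))$ and analogously for $g'(z_i)$, so by the reverse triangle inequality together with $\abs{a_{-n}} \neq \abs{b_{-n}}$ one gets
\begin{equation*}
\abs[\big]{\abs{h'(z_i)} - \abs{g'(z_i)}} \geq \tfrac{n}{2}\,\abs[\big]{\abs{a_{-n}} - \abs{b_{-n}}}\,\abs{w_i}^{-n-1}
\end{equation*}
for $\abs{c}$ large; combined with~\eqref{eqn:alpha} this yields $\alpha = \cO(\abs{c}^{-2/n})$. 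For $\omega_0$ I would fix the convex ball $D_i = B(z_i, \abs{w_i}/2)$, which excludes the pole $z_0$; on $D_i$ the second derivatives satisfy $\abs{h''}, \abs{g''} = \cO(\abs{w_i}^{-n-2})$ by the same Laurent estimates, so that~\eqref{eqn:omega0} gives $\omega_0 = \cO(\abs{w_i}^{-1}) = \cO(\abs{c}^{1/n})$.

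Consequently the Kantorovich quantity is $h_0 = \alpha \omega_0 = \cO(\abs{c}^{-1/n}) \to 0$ as $\abs{c} \to \infty$, and~\eqref{eqn:rho_minus_asympt} then gives $\rho = \cO(\abs{c}^{-2/n})$. For $\abs{c}$ large enough one has $h_0 < \tfrac{1}{2}$ and $\rho < \abs{w_i}/2$, so $\overline{B}(z_i, \rho) \subset D_i$, and Theorem~\ref{thm:kantorovich} yields a zero $z_i^* \in \overline{B}(z_i, \rho)$ of $f$ which is the limit of the harmonic Newton iteration started at $z_i$. This establishes statement~2 and the existence part of~1. For distinctness, the $z_i$ are $n$-th roots of a common number, hence pairwise separated by distances of order $\abs{w_i} \sim \abs{c}^{-1/n}$, which for large $\abs{c}$ strictly exceeds $2\rho = \cO(\abs{c}^{-2/n})$; thus the balls $\overline{B}(z_i, \rho)$ are pairwise disjoint and the $n$ limits $z_i^*$ are distinct.

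The main obstacle is careful bookkeeping of the powers of $\abs{c}$: both $\alpha$ and $\omega_0$ individually blow up (one to $0$, the other to $\infty$), and only their product $h_0$ is small, so the whole argument rests on making this cancellation visible through uniform leading-order expansions of $f, h', g', h'', g''$ on the ball $D_i$, together with the verification that $D_i$ lies inside the common convergence domain of the two Laurent series.
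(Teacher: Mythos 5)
Your proposal is correct and takes essentially the same route as the paper's proof: apply the Newton--Kantorovich theorem at each initial point, use the cancellation from Lemma~\ref{lem:deg_one} to get $\alpha=\cO(\abs{w_i}^2)$ and $\omega_0=\cO(\abs{w_i}^{-1})$ on a ball of radius proportional to $\abs{w_i}$, conclude $h_0\to 0$ and $\rho=\cO(\abs{w_i}^2)$, and obtain distinctness from the equispaced $n$-th roots versus the much smaller convergence disks. The only (cosmetic) difference is that the paper handles $n=1$ separately, noting $\abs{f(z_1)}=\cO(\abs{z_1})$ because the constant term cancels, whereas your uniform, slightly looser bound already suffices for all $n\geq 1$.
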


\begin{proof}
We apply the Newton-Kantorovich theorem for each of the $n$ initial points 
$z_j$.  Without loss of generality, we assume $z_0 = 0$.  Then ``$\abs{c}$ 
sufficiently large'' is equivalent to ``$\abs{z_j}$ sufficiently small''.

The cases $n = 1$ and $n \geq 2$ differ slightly, and we begin with $n \geq 2$.
First, we determine $\alpha$ from~\eqref{eqn:alpha}.  
With~\eqref{eqn:init_at_pole} and Lemma~\ref{lem:deg_one} we obtain
\begin{equation*}
a_{-n} (z_j-z_0)^{-n} + \conj{b}_{-n} (z_j-z_0)^{-n} = c = - (a_0 + \conj{b}_0),
\end{equation*}
and
\begin{equation} \label{eqn:bound_fzj}
\abs{f(z_j)} \leq (\abs{a_{-n+1}} + \abs{b_{-n+1}}) \abs{z_j}^{-n+1} + 
\cO(\abs{z_j}^{-n+2}),
\end{equation}
and from
\begin{equation*}
h'(z) = -n a_{-n} z^{-n-1} + \cO(z^{-n}), \quad
g'(z) = -n b_{-n} z^{-n-1} + \cO(z^{-n}),
\end{equation*}
also
\begin{equation*}
\abs[\big]{\abs{h'(z_j)} - \abs{g'(z_j)}}
= n \abs[\big]{\abs{a_{-n}} - \abs{b_{-n}}} \abs{z_j}^{-n-1} + 
\cO(\abs{z_j}^{-n}),
\end{equation*}
so that
\begin{equation*}
\begin{split}
\frac{\abs{f(z_j)}}{\abs[\big]{\abs{h'(z_j)} - \abs{g'(z_j)}}}
&\leq \frac{(\abs{a_{-n+1}} + \abs{b_{-n+1}}) \abs{z_j}^{-n+1} + 
\cO(\abs{z_j}^{-n+2})}{n \abs[\big]{\abs{a_{-n}} - \abs{b_{-n}}} 
\abs{z_j}^{-n-1} + \cO(\abs{z_j}^{-n})} \\
&= \frac{\abs{a_{-n+1}} + \abs{b_{-n+1}}}{n \abs[\big]{\abs{a_{-n}} - 
\abs{b_{-n}}}} \abs{z_j}^2 + \cO(\abs{z_j}^3) = \alpha.
\end{split}
\end{equation*}
Here we used that $1/(1+ \cO(x)) = 1 + \cO(x)$ for $x \to 0$.

Next we determine $\omega_0$ from~\eqref{eqn:omega0}.  Let $D = \{ z \in \C : 
\abs{z-z_j} < q \abs{z_j} \}$ for some $0 < q < 1$.  From
\begin{equation*}
h''(z) = n (n+1) a_{-n} z^{-n-2} + \cO(z^{-n-1}),
\end{equation*}
we get
\begin{equation*}
\sup_{t \in D} \abs{h''(t)} \leq n (n+1) \abs{a_{-n}} ((1-q) \abs{z_j})^{-n-2} 
+ \cO(\abs{z_j}^{-n-1}),
\end{equation*}
and similarly for $g$.  Then
\begin{equation*}
\begin{split}
&\frac{\sup_{t \in D} \abs{h''(t)} + \sup_{t \in D} 
\abs{g''(t)}}{\abs[\big]{\abs{h'(z_j)} - \abs{g'(z_j)}}} \\
&\leq
\frac{n (n+1) (\abs{a_{-n}} + \abs{b_{-n}}) ((1-q) \abs{z_j})^{-n-2} 
+ \cO(\abs{z_j}^{-n-1})}{n \abs[\big]{\abs{a_{-n}} - \abs{b_{-n}}} 
\abs{z_j}^{-n-1} + \cO(\abs{z_j}^{-n})} \\
&= \frac{(n+1) (\abs{a_{-n}} + \abs{b_{-n}}) 
(1-q)^{-n-2}}{\abs[\big]{\abs{a_{-n}} - \abs{b_{-n}}}} \abs{z_j}^{-1} + \cO(1) 
= \omega_0,
\end{split}
\end{equation*}
and $h_0 = \alpha \omega_0 = \cO(\abs{z_j})$.  Hence, for any $0 < q < 1$, 
we have $h_0 < \frac{1}{2}$ for sufficiently small $\abs{z_j}$.

It remains to show that $\rho < q \abs{z_j}$.
Using~\eqref{eqn:rho_minus_asympt} we get
\begin{equation}
\rho = \alpha + \cO(\alpha h_0)
= \cO(\abs{z_j}^2)
< q \abs{z_j}
\end{equation}
for sufficiently small $\abs{z_j}$.  By Theorem~\ref{thm:kantorovich},
the sequence of harmonic Newton iterates remains in the closed disk
$\overline{D}(z_j; \rho)$ and converges to a zero of $f$.

Finally, we show that the $n$ disks $\overline{D}(z_j; \rho)$, $j = 1, 2, 
\ldots, n$, are disjoint.  By construction, the $z_j$ are $n$-th 
roots.  Thus, they have equispaced angles, satisfy $\abs{z_j - z_{j+1}} = 2 
\sin(\pi/n) \abs{z_j}$, and have distance $\abs{z_j}$ from the origin.  
Since $\rho = \cO(\abs{z_j}^2)$, the disks are disjoint for sufficiently 
small $\abs{z_j}$.
This concludes the proof in the case $n \geq 2$.

For $n = 1$, the proof differs in~\eqref{eqn:bound_fzj}, where we have 
$\abs{f(z_1)} = \cO(\abs{z_1})$ (instead of $\cO(1)$).  Hence, $\alpha 
= \cO(\abs{z_1}^3)$ and the proof can be completed as for $n \geq 2$.
We omit the details.
\end{proof}

\begin{remark}
\begin{enumerate}
\item The assumption ``$\abs{c}$ is sufficiently large'' is necessary to 
guarantee the existence of a zero close to the pole;
see Figure~\ref{fig:tan}, where we have no zero ``close'' to the poles $\pm 
\frac{\pi}{2}$ and where $c = \pm \frac{\pi}{2}$, respectively.

\item One of the coefficients $a_{-n}$ and $b_{-n}$ in 
Theorem~\ref{thm:zero_at_pole} may be zero (but not both).  In particular, it 
is possible that only one of the functions $h$ and $g$ has a pole at $z_0$.

\item The case $\abs{a_{-n}} = \abs{b_{-n}}$ is excluded in 
Theorem~\ref{thm:zero_at_pole}.  Such functions can have nonisolated 
zeros, e.g., $f(z) = z^{-n} + \conj{z}^{-n} = 2 \re(z^{-n})$.

\item The Laurent coefficients of $h$ and $g$ are given by contour integrals, 
and can be computed numerically by quadrature, e.g. with the trapezoidal 
rule; see, e.g.,~\cite{TrefethenWeideman2014}.  To compute several 
Laurent coefficients at once, we can use the trapezoidal rule in combination 
with the DFT;
see~\cite[Sect.~3.8.1]{AblowitzFokas2003}, or~\cite[Sect.~13.4]{Henrici1986}.
\end{enumerate}
\end{remark}

By the transformation $z \mapsto \frac{1}{z}$, we obtain the following 
corollary for zeros ``close to infinity''.

\begin{corollary} \label{cor:zero_at_infty}
Let $f = h + \conj{g}$, where
\begin{equation*}
h(z) = \sum_{k = -\infty}^n a_k z^k, \quad
g(z) = \sum_{k = -\infty}^n b_k z^k, \quad \text{for } \abs{z} > R > 0,
\end{equation*}
with $n \geq 1$, and $\abs{a_n} \neq \abs{b_n}$.
Furthermore, let $c = -(a_0 + \conj{b}_0) \neq 0$ and 
let $z_1, \ldots, z_n$ be the $n$ solutions of
\begin{equation*}
z^n = \frac{\conj{a}_{n} c - \conj{b}_n \conj{c}}{\abs{a_n}^2 - \abs{b_n}^2}.
\end{equation*}
We then have for sufficiently large $\abs{c}$:
\begin{enumerate}
\item There exist $n$ distinct zeros of $f$ ``close to infinity''.
\item The zeros in 1.\ are the limits of the harmonic Newton iteration with 
initial points $z_1, \ldots, z_n$.
\end{enumerate}
\end{corollary}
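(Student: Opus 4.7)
The plan is to reduce Corollary~\ref{cor:zero_at_infty} to Theorem~\ref{thm:zero_at_pole} via the inversion $w = 1/z$, which converts ``behaviour at infinity'' into a pole of the transformed mapping at $w = 0$. Setting $\tilde h(w) \coloneq h(1/w)$ and $\tilde g(w) \coloneq g(1/w)$, on $0 < \abs{w} < 1/R$ these have Laurent expansions $\tilde h(w) = \sum_{j \geq -n} a_{-j} w^j$ and $\tilde g(w) = \sum_{j \geq -n} b_{-j} w^j$, so $\tilde f = \tilde h + \conj{\tilde g}$ fits the hypotheses of Theorem~\ref{thm:zero_at_pole} at the pole $w_0 = 0$ with $\tilde a_{-n} = a_n$, $\tilde b_{-n} = b_n$, and the \emph{same} constants $\tilde a_0 = a_0$, $\tilde b_0 = b_0$, hence the same $c$. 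Equation~\eqref{eqn:init_at_pole} applied to $\tilde f$ reads $w^n = (\abs{a_n}^2 - \abs{b_n}^2)/(\conj{a}_n c - \conj{b}_n \conj{c})$, whose solutions are exactly the reciprocals of the $z_j$ in the statement. For $\abs{c}$ large, Theorem~\ref{thm:zero_at_pole} furnishes $n$ distinct zeros of $\tilde f$ near $0$, and these correspond under $z = 1/w$ to $n$ distinct zeros of $f$ near infinity, establishing part~1.

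For part~2 I would not try to transfer convergence of the $\tilde f$-Newton iteration directly, because the harmonic Newton map is \emph{not} conjugate under the M\"obius map $z \mapsto 1/z$: the sequence $\{1/w_k\}$ obtained from the Newton iterates of $\tilde f$ is not the sequence of Newton iterates of $f$ with initial point $1/w_0$. Instead I would re-run the Newton--Kantorovich argument of Theorem~\ref{thm:zero_at_pole} for $f$ itself at each $z_j$, with asymptotics now as $\abs{z} \to \infty$. Combining the expansions $h(z) = a_n z^n + \cO(z^{n-1})$, $g(z) = b_n z^n + \cO(z^{n-1})$ with the cancellation $a_n z_j^n + \conj{b_n z_j^n} = c = -(a_0 + \conj{b}_0)$ supplied by Lemma~\ref{lem:deg_one} gives $\abs{f(z_j)} = \cO(\abs{z_j}^{n-1})$ (for $n \geq 2$; for $n = 1$ the bound improves to $\cO(\abs{z_j}^{-1})$), while $\abs[\big]{\abs{h'(z_j)} - \abs{g'(z_j)}} = n \abs[\big]{\abs{a_n} - \abs{b_n}}\,\abs{z_j}^{n-1} + \cO(\abs{z_j}^{n-2})$, so the Kantorovich quantity $\alpha$ from~\eqref{eqn:alpha} is $\cO(1)$. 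Taking $D = B(z_j, q\abs{z_j})$ with fixed $q \in (0,1)$ (which lies in the annulus $\abs{z} > R$ once $\abs{z_j}(1-q) > R$) bounds $\sup_D \abs{h''}$ and $\sup_D \abs{g''}$ by $\cO(\abs{z_j}^{n-2})$, yielding $\omega_0 = \cO(\abs{z_j}^{-1})$ from~\eqref{eqn:omega0}, hence $h_0 = \alpha\omega_0 = \cO(\abs{z_j}^{-1}) \to 0$. With~\eqref{eqn:rho_minus_asympt} this gives $\rho = \cO(1) < q\abs{z_j}$ for $\abs{c}$ large, so $\overline{B}(z_j;\rho) \subseteq D$ and Theorem~\ref{thm:kantorovich} applies.

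Finally, the $n$ initial points $z_1, \ldots, z_n$ are equispaced on $\abs{z} = \abs{z_j}$, with pairwise distances $2 \sin(\pi/n)\abs{z_j}$; since $\rho = \cO(1)$ is dominated by $\abs{z_j}$ for $\abs{c}$ large, the $n$ Kantorovich balls are disjoint, so the $n$ Newton iterations converge to $n$ \emph{distinct} zeros of $f$. I expect the main obstacle to be essentially organisational: the small parameter flips from $\abs{z_j} \to 0$ in Theorem~\ref{thm:zero_at_pole} to $1/\abs{z_j} \to 0$ here, and although the proof structure is parallel, the individual Kantorovich quantities now diverge ($\abs{f(z_j)}$ and $\abs[\big]{\abs{h'} - \abs{g'}}$ both blow up polynomially) while only the quotient $h_0$ tends to zero; keeping track of leading orders, and of the minor case $n=1$ separately, is where the bookkeeping is most delicate.
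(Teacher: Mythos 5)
Your argument is correct, and it is worth noting how it relates to the paper's treatment: the paper proves nothing here beyond the remark that the corollary is ``obtained by the transformation $z \mapsto 1/z$'', i.e., it reduces everything to Theorem~\ref{thm:zero_at_pole} via inversion. You use that reduction only for the statement/initial points and for existence (part~1), and you correctly identify the point that the bare transformation does not settle: the harmonic Newton map is not equivariant under $z \mapsto 1/z$ (Newton's method is only affinely covariant; a short computation shows $H_{\tilde f}(w)$ and $1/H_f(1/w)$ agree only to leading order), so convergence of the iteration for $\tilde f(w) = f(1/w)$ from $w_j = 1/z_j$ does not by itself give part~2. Your remedy --- re-running the Newton--Kantorovich estimates for $f$ itself in the regime $\abs{z_j} \to \infty$, with $\abs{f(z_j)} = \cO(\abs{z_j}^{n-1})$ after the cancellation $a_n z_j^n + \conj{b_n z_j^n} = c = -(a_0+\conj{b}_0)$, $\abs[\big]{\abs{h'(z_j)}-\abs{g'(z_j)}} \sim n\abs[\big]{\abs{a_n}-\abs{b_n}}\abs{z_j}^{n-1}$, hence $\alpha = \cO(1)$, $\omega_0 = \cO(\abs{z_j}^{-1})$, $h_0 \to 0$, $\rho = \cO(1) \ll q\abs{z_j}$ by~\eqref{eqn:rho_minus_asympt}, plus disjointness of the $n$ Kantorovich disks since the $z_j$ are equispaced at mutual distance $2\sin(\pi/n)\abs{z_j}$ --- is sound (including the separate $n=1$ bookkeeping and the requirement $(1-q)\abs{z_j} > R$ so that $D$ lies in the annulus of convergence), and it exactly mirrors the structure of the paper's proof of Theorem~\ref{thm:zero_at_pole} with the small parameter $1/\abs{z_j}$ in place of $\abs{z_j}$. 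What your route buys is a fully rigorous proof of part~2 where the paper's one-liner is silent; the only cosmetic redundancy is that your direct Kantorovich argument already yields existence of the $n$ distinct zeros, so part~1 via the transformed pole at $w_0=0$ could be dropped (or, if kept, you should note that the zeros it produces are the same ones located in the disks $\overline{B}(z_j;\rho)$).
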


\begin{example} \label{ex:poles}

\begin{figure}[t!]
{\centering
\includegraphics[width=0.48\linewidth]{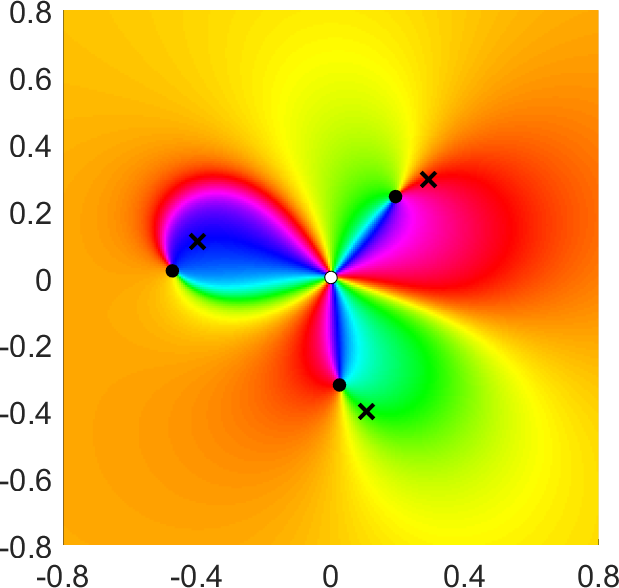}
\includegraphics[width=0.48\linewidth]{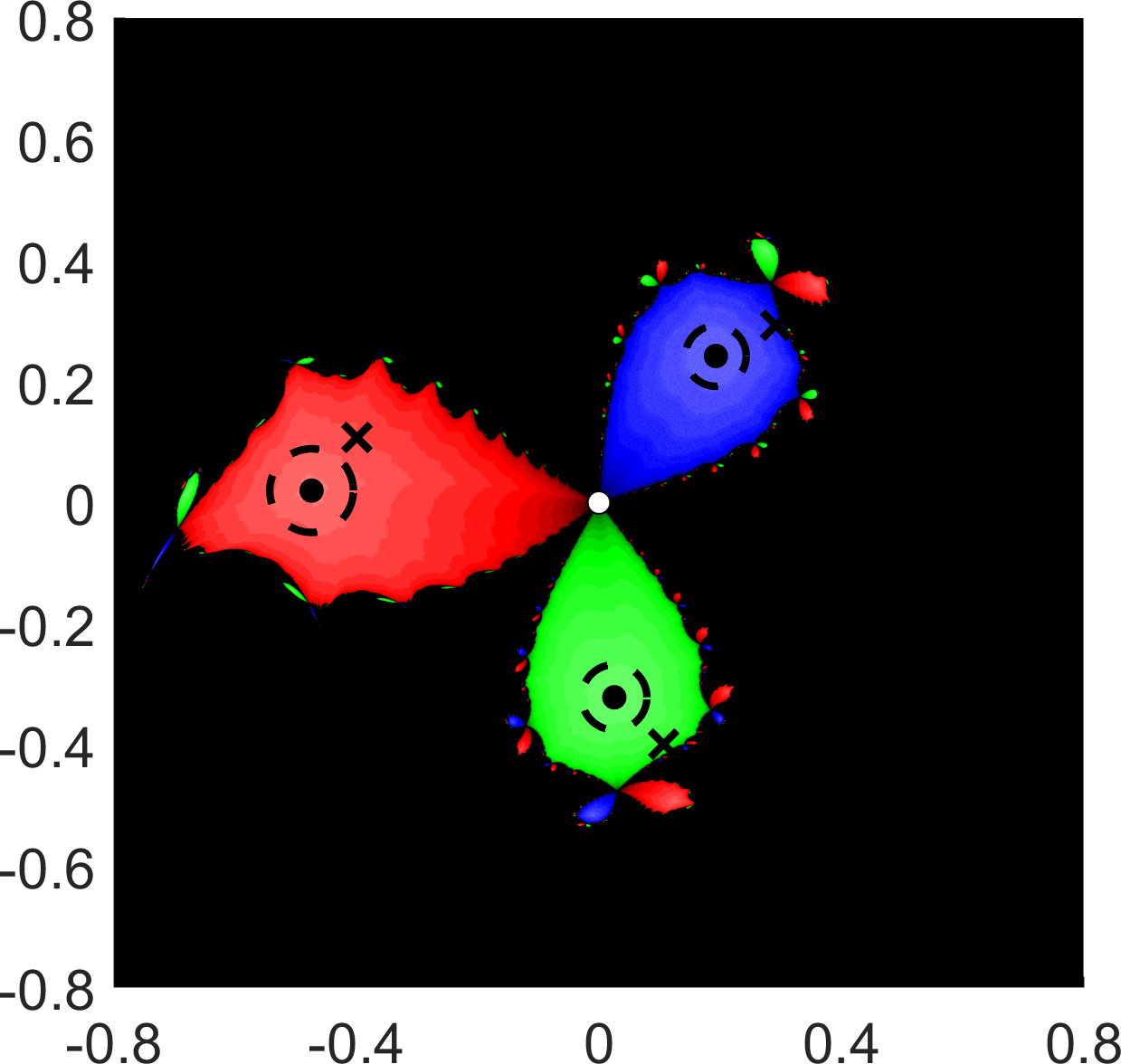}
\vspace{2mm}

\includegraphics[width=0.48\linewidth]{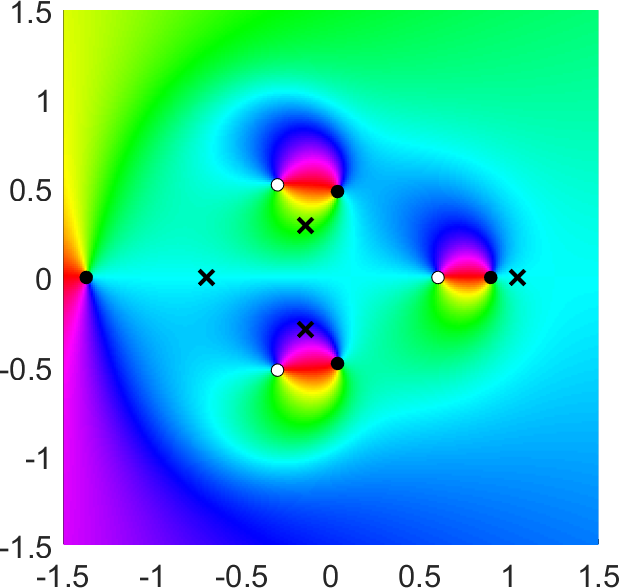}
\includegraphics[width=0.48\linewidth]{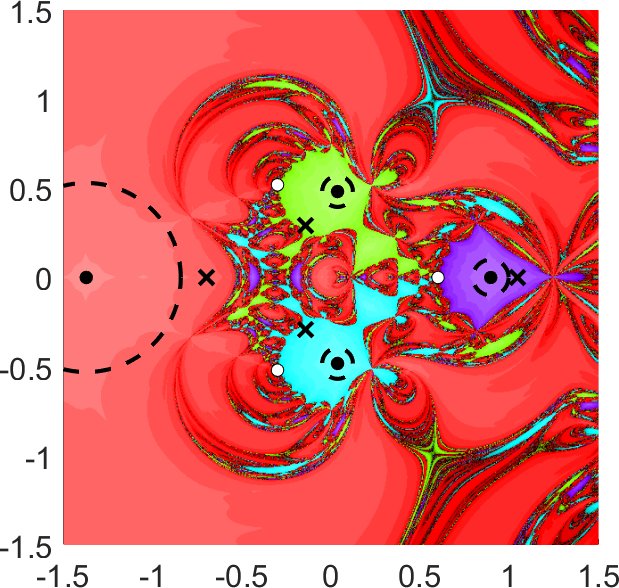}
\vspace{2mm}

}
\caption{Phase plots (left) and basins of attraction (right) of $f(z) = 
\frac{1}{z^3} + 10(1+i) + \conj{\left( \frac{2i}{z^2} \right)}$ (top) and
$f(z) = 
\frac{z^2}{z^3 - 0.6^3} - 0.7 - \conj{z}$ (bottom) from Example~\ref{ex:poles}.
Crosses indicate the initial points for the harmonic Newton method from 
Theorem~\ref{thm:zero_at_pole} and Corollary~\ref{cor:zero_at_infty}.
The dashed circles are regions of convergence guaranteed by 
Theorem~\ref{thm:mysovskii}.
}
\label{fig:poles}
\end{figure}
Let $f(z) = \frac{1}{z^3} + 10(1+i) + \conj{\left( \frac{2i}{z^2} \right)}$, 
where the 
analytic and anti-analytic part have poles of order $3$ and $2$ in $z_0 = 0$, 
with $c = - 10(1+i)$.
As predicted by Theorem~\ref{thm:zero_at_pole}, there are three distinct zeros 
close to $z_0 = 0$; see Figure~\ref{fig:poles}.  The initial points from the 
theorem (marked by crosses) lie close to the edges of the basins.
After $9$ steps of the harmonic Newton iteration, the maximal residual is 
$3.5527 \cdot 10^{-15}$.
For points in the black region, the sequence of harmonic Newton iterates tends
to $\infty$.
Note that the boundary of the basins of attraction seems to have fractal 
character; compare to the ``Douady rabbit''.

Next, we consider the function $f(z) = \frac{z^2}{z^3 - 0.6^3} - 0.7 - 
\conj{z}$, which is the function from Example~\ref{ex:mpw} with an additive 
perturbation.  The function has three simple poles, one zero close to each 
pole, and a fourth zero; see Figure~\ref{fig:poles}.  The crosses indicate the 
initial points from Theorem~\ref{thm:zero_at_pole} and from 
Corollary~\ref{cor:zero_at_infty} (for the fourth zero).
As for the previous example, they are close to the boundary of the basins of 
attraction of the zeros, which again suggests that the constant $c$ cannot be 
much smaller.
After $7$ steps of the harmonic Newton iteration, the maximal residual 
of the iterates is $4.3299 \cdot 10^{-15}$.

The plot of the basins of attraction of the two functions also show dashed 
circles, which are regions of convergence guaranteed by the refined 
Newton-Mysovskii 
theorem (Theorem~\ref{thm:mysovskii}).  For a zero $z_*$ of $f$, this region is 
computed as follows.
Since the theorem guarantees convergence for initial points in a disk, we
choose $D$ as a disk with radius $r$ and center $z_*$.  To estimate the 
constant 
$\omega$, we evaluate~\eqref{eqn:omega_mysovskii} on a grid in $D$.
The points in the disk with radius $\min \{ r, 2/\omega \}$ and center $z_*$
are guaranteed to converge to $z_*$, and we maximize over $r$ to compute the 
largest such disk.
We observe that our initial points are not covered by 
Theorem~\ref{thm:mysovskii}, but the iterates converge nevertheless to zeros of 
the functions.
\end{example}

\section{Behavior near the critical set}
\label{sect:critical}

The harmonic Newton map is not defined on the critical set $\cC$;
see Section~\ref{sect:iter}.  Nevertheless, in Examples~\ref{ex:tan} 
and~\ref{ex:einstein} below, the harmonic Newton method approximates singular 
zeros, i.e., $f(z_0) = 0$ with $z_0 \in \cC$,
reasonably well.
We now consider solutions of $f(z) = \eta$ with small $\abs{\eta}$ and $z$ 
close to the singular zero $z_0$.
For rational harmonic functions $f(z) = r(z) - \conj{z}$, these solutions 
play a crucial role in the study of the (global) number of zeros of $f$; 
see~\cite{LiesenZur2018a}.
As it turns out, the number of (local) solutions of $f(z) = \eta$ close to 
$z_0$ can be $0, 1, 2$, or $3$, depending on $\eta$ and $z_0$.

Let $f$ be a general harmonic mapping with a singular zero $z_0$.
We prove existence of solutions of the equation $f(z) = \eta$ by showing 
convergence of the harmonic Newton iteration with suitable initial points to 
zeros of $f(z) - \eta$.
Depending on $\eta$, the shape of the basins of attraction and the number of 
solutions can change dramatically.

For simplicity we transform $f = h + \conj{g}$ in a local normal form.  Let 
$z_0 \in \cC$ be a singular zero of $f$ with $\abs{h'(z_0)} = \abs{g'(z_0)} 
\neq 0$; see~\eqref{eqn:crit}.
We then have
\begin{equation*}
f(z) = h(z) + \conj{g(z)}
= \sum_{k=0}^\infty a_k (z-z_0)^k + \conj{\sum_{k=0}^\infty b_k (z-z_0)^k}
\end{equation*}
with $0 = f(z_0) = a_0 + \conj{b_0}$, and $0 \neq \conj{b_1} = a_1 e^{i 2 
\theta}$,
$\theta \in \co{0, \pi}$.  Hence,
\begin{equation*}
f(z) = \frac{\conj{b_1}}{e^{i \theta}} \left( \sum_{k=1}^\infty 
\frac{a_k}{\conj{b_1}} 
e^{i (k+1) \theta} (e^{-i \theta} (z-z_0))^k 
+ \conj{\sum_{k=1}^\infty \frac{b_k}{b_1 } e^{i (k-1) \theta} 
(e^{-i \theta} (z-z_0))^k } \right),
\end{equation*}
and substituting
\begin{equation} \label{eqn:subst}
\zeta = e^{-i \theta} (z-z_0), \quad
\alpha_k = \frac{a_k}{\conj{b_1}} e^{i(k+1)\theta} = \frac{a_k}{a_1} e^{i 
(k-1) \theta}, \quad
\beta_k = \frac{b_k}{b_1} e^{i (k-1) \theta},
\end{equation}
gives $f(z) = \conj{b}_1 e^{-i \theta} \widetilde{f}(\zeta)$, where 
$\widetilde{f}$ is the unique \emph{local normal form} of $f$ at $z_0$,
\begin{equation} \label{eqn:normal}
\widetilde{f}(\zeta) = \zeta + \conj{\zeta} + \sum_{k=2}^\infty \alpha_k \zeta 
^k + \conj{\sum_{k = 2}^\infty \beta_k \zeta^k}.
\end{equation}
Here we used translation ($\zeta_0 = 0$), and scaling and rotation ($\alpha_1 = 
\beta_1 = 0$).

We take solutions of the truncated problem
$\zeta + \conj{\zeta} + \alpha_2 \zeta^2 + \conj{\beta}_2 \conj{\zeta}^2 
= \eta$
that are close to the origin as initial points for the harmonic Newton method 
applied to $\widetilde{f}(\zeta) - \eta$.

\begin{lemma} \label{lem:zero_at_crit}
Let $f = h + \conj{g}$ be as in~\eqref{eqn:normal},
\begin{equation*}
f(\zeta) = \zeta + \conj{\zeta} + \sum_{k=2}^\infty \alpha_k \zeta^k + 
\conj{\sum_{k=2}^\infty \beta_k \zeta^k}.
\end{equation*}
In particular, $f$ has a singular zero at $0$ with $\abs{h'(0)} = \abs{g'(0)} 
\neq 0$.
Furthermore, let $f_{\delta c}(\zeta) = f(\zeta) - \delta c$ with $c = 
-(\alpha_2 + \conj{\beta}_2)$ and real $\delta \in \R$.
\begin{enumerate}
\item For $\im(c) \neq 0$ and sufficiently small $\delta > 0$,
the function $f_{\delta c}$ has two distinct zeros close to $0$.
These zeros are the limits of the harmonic Newton iteration for $f_{\delta c}$
with the initial points $\zeta_\pm = \pm i \sqrt{\delta}$.

\item For $\im(c) = 0$, $\abs{\alpha_2} \neq \abs{\beta_2}$, and
sufficiently small $\abs{\delta}$, 
the harmonic Newton iteration for $f_{\delta c}$ with initial point $\zeta_3 = 
\frac{1 - \sqrt{1 - \delta c^2}}{c}$ converges to a zero of $f_{\delta c}$.
\end{enumerate}
\end{lemma}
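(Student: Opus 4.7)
The plan is to mirror the proof of Theorem~\ref{thm:zero_at_pole}, applying the Newton-Kantorovich theorem (Theorem~\ref{thm:kantorovich}) to $f_{\delta c}$ at each prescribed initial point, with asymptotic estimates in $\sqrt{\delta}$ or $\delta$ replacing the ones in $\abs{z_j}$. The bounds~\eqref{eqn:alpha} and~\eqref{eqn:omega0} apply unchanged, so the work reduces to estimating the residual $\abs{f_{\delta c}(\zeta)}$, the Jacobian $J_f(\zeta) = \abs{h'(\zeta)}^2 - \abs{g'(\zeta)}^2$, and the sup-norms of $h''$, $g''$ on a disk $D$ around each initial point.

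First I verify that the initial points exactly solve the degree-two truncated equation $\zeta + \conj{\zeta} + \alpha_2 \zeta^2 + \conj{\beta_2 \zeta^2} = \delta c$: for $\zeta_\pm = \pm i\sqrt{\delta}$ the linear part vanishes while $\alpha_2 \zeta_\pm^2 + \conj{\beta_2 \zeta_\pm^2} = -\delta(\alpha_2 + \conj{\beta_2}) = \delta c$, and for $\zeta_3$, which is real because $c \in \R$, the equation reduces to $c\zeta^2 - 2\zeta + \delta c = 0$, whose root near the origin is precisely $\zeta_3 = (1 - \sqrt{1 - \delta c^2})/c$. Consequently, at each of these initial points, the residual $f_{\delta c}(\zeta)$ consists only of the higher-order tail $\sum_{k \geq 3}(\alpha_k \zeta^k + \conj{\beta_k \zeta^k})$ and is of size $\cO(\abs{\zeta}^3)$.

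For part~1, $\abs{\zeta_\pm} = \sqrt{\delta}$, and Taylor expansion of $h'$, $g'$ at $0$ (using $h'(0) = g'(0) = 1$) gives $\abs{h'(\zeta_\pm)}^2 = 1 \mp 4\sqrt{\delta}\,\im(\alpha_2) + \cO(\delta)$ and similarly for $g'$, so
\[
J_f(\zeta_\pm) = \pm 4\sqrt{\delta}\bigl(\im(\beta_2) - \im(\alpha_2)\bigr) + \cO(\delta) = \pm 4\sqrt{\delta}\,\im(c) + \cO(\delta),
\]
which has exact order $\sqrt{\delta}$ by the hypothesis $\im(c) \neq 0$. Choosing $D = B(\zeta_\pm; q\sqrt{\delta})$ for a fixed $q \in (0,1)$ and using that $h''$, $g''$ are bounded on a fixed neighborhood of $0$, the bounds~\eqref{eqn:alpha} and~\eqref{eqn:omega0} yield $\alpha = \cO(\delta)$ and $\omega_0 = \cO(\delta^{-1/2})$, whence $h_0 = \alpha\omega_0 = \cO(\sqrt{\delta}) < 1/2$ and, by~\eqref{eqn:rho_minus_asympt}, $\rho = \cO(\delta) < q\sqrt{\delta}$ for sufficiently small $\delta > 0$. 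Theorem~\ref{thm:kantorovich} then produces a zero of $f_{\delta c}$ in each of the closed disks $\overline{B}(\zeta_\pm; \rho)$; since $\abs{\zeta_+ - \zeta_-} = 2\sqrt{\delta}$ dominates $2\rho = \cO(\delta)$, the two disks are disjoint and the two zeros are distinct.

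For part~2, the same expansion at the real point $\zeta_3$ of size $\cO(\abs{\delta})$ gives $J_f(\zeta_3) = 4\zeta_3\bigl(\re(\alpha_2) - \re(\beta_2)\bigr) + \cO(\delta^2)$. The key algebraic step is that $\im(c) = 0$ forces $\im(\alpha_2) = \im(\beta_2)$, so that $\abs{\alpha_2}^2 - \abs{\beta_2}^2 = \re(\alpha_2)^2 - \re(\beta_2)^2$, and the hypothesis $\abs{\alpha_2} \neq \abs{\beta_2}$ therefore implies $\re(\alpha_2) \neq \re(\beta_2)$; hence $J_f(\zeta_3) = \cO(\abs{\delta})$ is nonvanishing. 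Consequently $\alpha = \cO(\delta^2)$, $\omega_0 = \cO(\abs{\delta}^{-1})$, $h_0 = \cO(\abs{\delta}) < 1/2$, and $\rho = \cO(\delta^2)$ is dominated by $q\abs{\zeta_3} = \cO(\abs{\delta})$, so Theorem~\ref{thm:kantorovich} again yields a zero of $f_{\delta c}$. The main obstacle is the careful tracking of half-integer versus integer powers of $\delta$ so that the inequalities $h_0 < 1/2$, $\overline{B}(\zeta_j; \rho) \subseteq D$, and (in part~1) disjointness of the two disks all close simultaneously; the other subtle point is the algebraic derivation of $\re(\alpha_2) \neq \re(\beta_2)$ from the standing hypotheses of part~2.
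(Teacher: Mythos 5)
Your proposal is correct and follows essentially the same route as the paper: Newton--Kantorovich applied at each prescribed initial point with $D$ a disk of radius proportional to $\abs{\zeta_j}$, the residual of order $\abs{\zeta_j}^3$ because the initial points solve the truncated quadratic exactly, the Jacobian of exact order $\sqrt{\delta}$ (resp.\ $\abs{\delta}$) via $\im(c)\neq 0$ (resp.\ $\re(\alpha_2)\neq\re(\beta_2)$), and the resulting orders $h_0=\cO(\abs{\zeta_j})$, $\rho=\cO(\abs{\zeta_j}^2)<q\abs{\zeta_j}$ plus disjointness of the two disks in part~1. The only cosmetic omission is remarking that $\abs{\alpha_2}\neq\abs{\beta_2}$ forces $c\neq 0$ in part~2 (so $\zeta_3$ is well defined), which follows at once from your own computation $\re(\alpha_2)^2\neq\re(\beta_2)^2$.
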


\begin{proof}
To apply the Newton-Kantorovich theorem to $f_{\delta c}$ and each initial 
point, we estimate $\alpha$ and $\omega_0$; see~\eqref{eqn:alpha} 
and~\eqref{eqn:omega0}.

Let $\im(c) \neq 0$.  Since $\zeta_\pm = \pm i \sqrt{\delta}$ 
solves $\zeta + \conj{\zeta} + \alpha_2 \zeta^2 + \conj{\beta}_2 \conj{\zeta}^2 
= \delta c$, we have
\begin{equation*}
\abs{f_{\delta c}(\zeta_\pm)}
\leq (\abs{\alpha_3} + \abs{\beta_3}) \abs{\zeta_\pm}^3 + 
\cO(\abs{\zeta_\pm}^4).
\end{equation*}
From
\begin{equation*}
h'(\zeta) = 1 + 2 \alpha_2 \zeta + \cO(\zeta^2), \quad
g'(\zeta) = 1 + 2 \beta_2 \zeta + \cO(\zeta^2),
\end{equation*}
we have, using $\conj{\zeta}_\pm = - \zeta_\pm$,
\begin{equation*}
\abs{h'(\zeta_\pm)}^2 = h'(\zeta_\pm) \conj{h'(\zeta_\pm)}
= 1 + 4i \im(\alpha_2) \zeta_\pm + \cO(\abs{\zeta_\pm}^2),
\end{equation*}
and similarly for $g$.  Thus
\begin{align*}
\abs[\big]{\abs{h'(\zeta_\pm)} - \abs{g'(\zeta_\pm)}}
&= \frac{\abs[\big]{\abs{h'(\zeta_\pm)}^2 - 
\abs{g'(\zeta_\pm)}^2}}{\abs{h'(\zeta_\pm)} + \abs{g'(\zeta_\pm)}}
= \frac{4 \abs{\im(\alpha_2 - \beta_2) \zeta_\pm} + \cO(\abs{\zeta_\pm}^2)}{2 + 
\cO(\abs{\zeta_\pm})} \\
&= 2 \abs{\im(\alpha_2 - \beta_2)} \abs{\zeta_\pm} + \cO(\abs{\zeta_\pm}^2) \\
&= 2 \abs{\im(c)} \abs{\zeta_\pm} + \cO(\abs{\zeta_\pm}^2),
\end{align*}
for sufficiently small $\delta > 0$.  Together we find
\begin{align*}
\frac{\abs{f_{\delta c}(\zeta_\pm)}}{\abs[\big]{\abs{h'(\zeta_\pm)} - 
\abs{g'(\zeta_\pm)}}}
&\le \frac{(\abs{\alpha_3} + \abs{\beta_3})\abs{\zeta_\pm}^3 + 
\cO(\abs{\zeta_\pm}^4)}{2 \abs{\im(c)} \abs{\zeta_\pm} + 
\cO(\abs{\zeta_\pm}^2)} \\
&= \frac{\abs{\alpha_3} + \abs{\beta_3}}{2\abs{\im(c)}} \abs{\zeta_\pm}^2 + 
\cO(\abs{\zeta_\pm}^3)
= \alpha.
\end{align*}
Next, we estimate $\omega_0$ on $D_\pm = \{ \zeta \in \C: \abs{\zeta - 
\zeta_\pm} < q \abs{\zeta_\pm} \}$ for a $0 < q < 1$.  From
\begin{equation*}
h''(\zeta) = 2 \alpha_2 + \cO(\zeta), \quad
g''(\zeta) = 2 \beta_2 + \cO(\zeta),
\end{equation*}
we have
\begin{equation*}
\sup_{\tau \in D_\pm} \abs{h''(\tau)} + \sup_{\tau \in D_\pm} \abs{g''(\tau)}
\leq 2 (\abs{\alpha_2} + \abs{\beta_2}) + \cO(\abs{\zeta_\pm}),
\end{equation*}
and
\begin{equation*}
\begin{split}
\frac{\sup_{\tau \in D_\pm} \abs{h''(\tau)} + \sup_{\tau \in D_\pm} 
\abs{g''(\tau)}}{\abs[\big]{\abs{h'(\zeta_\pm)} - \abs{g'(\zeta_\pm)}}}
&\leq \frac{2 (\abs{\alpha_2} + \abs{\beta_2}) + 
\cO(\abs{\zeta_\pm})}{2\abs{\im(c)}
\abs{\zeta_\pm} + \cO(\abs{\zeta_\pm}^2)} \\
&=  \frac{\abs{\alpha_2} + \abs{\beta_2}}{\abs{\im(c)} \abs{\zeta_\pm}} + \cO(1)
= \omega_0.
\end{split}
\end{equation*}
Thus
\begin{equation*}
h_0 = \alpha \omega_0
= \frac{(\abs{\alpha_2} + \abs{\beta_2})(\abs{\alpha_3} + \abs{\beta_3})}{2 
\abs{\im(c)}^2} 
\abs{\zeta_\pm} + \cO(\abs{\zeta_\pm}^2),
\end{equation*}
so that $h_0 < \frac{1}{2}$ for sufficiently small $\delta$, 
since $\zeta_\pm = \pm i \sqrt{\delta} \to 0$ for $\delta \to 0$.
Finally, we show $\rho < q \abs{\zeta_\pm}$.
Using~\eqref{eqn:rho_minus_asympt} we find that
\begin{equation}
\rho = \alpha + \cO(\alpha h_0)
= \frac{\abs{\alpha_3} + \abs{\beta_3}}{2\abs{\im(c)}} \abs{\zeta_\pm}^2 +
\cO(\abs{\zeta_\pm}^3) < q \abs{\zeta_\pm},
\end{equation}
provided $\zeta_\pm$ is sufficiently small.  Then, by 
Theorem~\ref{thm:kantorovich},
the sequence of harmonic Newton iterates remains in the closed disks
$\conj{D}(\zeta_\pm;\rho)$
and converges to a zero of 
$f_{\delta c}$.  Furthermore, both disks are disjoint, which implies that the 
iterations converge to two distinct zeros of $f_{\delta c}$.

For the second part, assume that $\im(c) = 0$ and $\abs{\alpha_2} \neq 
\abs{\beta_2}$.
The latter implies that $c \neq 0$.
The only difference to the first case is in the estimate of
$\abs[\big]{\abs{h'(\zeta_3)} - \abs{g'(\zeta_3)}}$.
Since $c$ is real, also $\zeta_3 = \frac{1 - \sqrt{1 - \delta c^2}}{c} = 
\frac{c}{2} \delta + \cO(\delta^2)$ is real for $\delta \leq \frac{1}{c^2}$, 
and we find from $h'(\zeta) = 1 + 2 \alpha_2 \zeta + \cO(\zeta^2)$ that
\begin{equation*}
\abs{h'(\zeta_3)}^2
= h'(\zeta_3) \conj{h'(\zeta_3)}
= 1 + 4 \re(\alpha_2) \zeta_3 + \cO(\abs{\zeta_3}^2),
\end{equation*}
and similarly for $g$.  Then
\begin{align*}
\abs[\big]{\abs{h'(\zeta_3)} - \abs{g'(\zeta_3)}}
&= \frac{\abs[\big]{\abs{h'(\zeta_3)}^2 - 
\abs{g'(\zeta_3)}^2}}{\abs{h'(\zeta_3)} + \abs{g'(\zeta_3)}}
= \frac{\abs{4 \re(\alpha_2 - \beta_2) \zeta_3 + \cO(\zeta_3^2)}}{2 + 
\cO(\abs{\zeta_3})} \\
&= 2 \abs{\re(\alpha_2 - \beta_2)} \abs{\zeta_3} + \cO(\abs{\zeta_3}^2).
\end{align*}
The assumptions $\abs{\alpha_2} \neq \abs{\beta_2}$ and $\im(c) = 0$ guarantee 
that $\re(\alpha_2 - \beta_2) \neq 
0$ and thus that $\abs[\big]{\abs{h'(\zeta_3)} - \abs{g'(\zeta_3)}} = 
\cO(\abs{\zeta_3})$.
We then obtain $\alpha$, $\omega_0$ in the same orders of magnitude as in the 
first case, so that $h_0 = \alpha \omega_0 = \cO(\abs{\zeta_3}) < \frac{1}{2}$ 
and
$\rho < q \abs{\zeta_3}$ for sufficiently small $\abs{\delta}$.  Therefore, the 
harmonic Newton iteration with initial point $\zeta_3$ converges to a zero of 
$f_{\delta c}$.
\end{proof}

\begin{theorem} \label{thm:zero_at_crit}
Let the harmonic mapping
\begin{equation*}
f(z) = h(z) + \conj{g(z)}
= \sum_{k=1}^\infty a_k (z-z_0)^k + \conj{\sum_{k=1}^\infty b_k (z-z_0)^k}
\end{equation*}
have a singular zero at $z_0$ with $\abs{a_1} = \abs{b_1} \neq 0$.
Furthermore, let $\theta \in \co{0, \pi}$ be defined by $\conj{b}_1 = a_1 e^{i 
2 \theta}$, and let
\begin{equation*}
f_{\delta c}(z) = f(z) - \delta c, \quad \text{with} \quad 
c = \conj{b}_1 e^{-i \theta} \widetilde{c}
\quad \text{and} \quad
\widetilde{c} = - \frac{a_2}{a_1} e^{i \theta} - \conj{ \left( \frac{b_2}{b_1} 
e^{i \theta} \right) },
\end{equation*}
where $\delta$ is real.
\begin{enumerate}
\item For $\im(\widetilde{c}) \neq 0$ and sufficiently small $\delta > 0$,
the function $f_{\delta c}$ has two distinct zeros close to $z_0$.
These zeros are the limits of the 
harmonic Newton iteration for $f_{\delta c}$
with the initial points 
\begin{equation*}
z_\pm = z_0 \pm i \sqrt{\delta} e^{i \theta}
= z_0 \pm i \sqrt{\delta \frac{\conj{b}_1}{a_1}}.
\end{equation*}

\item For $\im(\widetilde{c}) = 0$, $\abs{a_2} \neq
\abs{b_2}$, and sufficiently small $\abs{\delta}$,  the harmonic Newton 
iteration with initial point
\begin{equation*}
z_3 = z_0 + \frac{1 - \sqrt{1 - \delta \widetilde{c}^2}}{\widetilde{c}} e^{i 
\theta},
\end{equation*}
converges to a zero of $f_{\delta c}$.
\end{enumerate}
\end{theorem}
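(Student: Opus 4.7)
The plan is to reduce Theorem~\ref{thm:zero_at_crit} to Lemma~\ref{lem:zero_at_crit} by means of the normal form substitution~\eqref{eqn:subst}--\eqref{eqn:normal} that has already been set up above the statement. Under $\zeta = e^{-i\theta}(z-z_0)$ one obtains $f(z) = \conj{b}_1 e^{-i\theta}\, \widetilde{f}(\zeta)$, and a direct substitution of $\alpha_k = (a_k/a_1)e^{i(k-1)\theta}$, $\beta_k = (b_k/b_1)e^{i(k-1)\theta}$ into $-(\alpha_2 + \conj{\beta}_2)$ yields precisely the $\widetilde{c}$ displayed in the theorem. With the choice $c = \conj{b}_1 e^{-i\theta}\widetilde{c}$, the shifted functions satisfy $f_{\delta c}(z) = \conj{b}_1 e^{-i\theta}\,\widetilde{f}_{\delta\widetilde{c}}(\zeta)$ identically.

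The second ingredient is that the harmonic Newton iteration is equivariant under this transformation: the iteration for $f_{\delta c}$ in the $z$-plane is conjugate, via the affine map $\zeta \mapsto z_0 + e^{i\theta}\zeta$, to the iteration for $\widetilde{f}_{\delta\widetilde{c}}$ in the $\zeta$-plane. I would verify this by a direct computation from~\eqref{eqn:newton_map}, using $h'(z) = a_1\widetilde{h}'(\zeta)$, $g'(z) = b_1\widetilde{g}'(\zeta)$, together with $\conj{b}_1 = a_1 e^{i2\theta}$ and $\abs{a_1} = \abs{b_1}$. These relations force the unit-modulus factors in the numerator of the Newton correction to consolidate into a single $e^{i\theta}$, while the factors in the denominator $\abs{h'(z)}^2 - \abs{g'(z)}^2 = \abs{b_1}^2(\abs{\widetilde{h}'(\zeta)}^2 - \abs{\widetilde{g}'(\zeta)}^2)$ cancel against $\abs{b_1}^2$ upstairs, yielding
\begin{equation*}
H_{f_{\delta c}}(z_0 + e^{i\theta}\zeta) = z_0 + e^{i\theta}\, H_{\widetilde{f}_{\delta\widetilde{c}}}(\zeta).
\end{equation*}
Hence any convergent Newton sequence for $\widetilde{f}_{\delta\widetilde{c}}$ starting at $\zeta_0$ lifts to a convergent Newton sequence for $f_{\delta c}$ starting at $z_0 + e^{i\theta}\zeta_0$, and the limits are related in the same way.

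With the conjugacy in place, both parts of the theorem follow from Lemma~\ref{lem:zero_at_crit} applied to $\widetilde{f}_{\delta\widetilde{c}}$. For part 1, the lemma's initial points $\zeta_\pm = \pm i\sqrt{\delta}$ pull back to $z_\pm = z_0 \pm i\sqrt{\delta}\,e^{i\theta}$, which equals $z_0 \pm i\sqrt{\delta\,\conj{b}_1/a_1}$ using $e^{i2\theta} = \conj{b}_1/a_1$, and the two distinct zeros of $\widetilde{f}_{\delta\widetilde{c}}$ map to two distinct zeros of $f_{\delta c}$ near $z_0$. For part 2, the hypothesis $\abs{\alpha_2} \neq \abs{\beta_2}$ of the lemma is equivalent to $\abs{a_2} \neq \abs{b_2}$ because $\abs{\alpha_2} = \abs{a_2}/\abs{a_1}$, $\abs{\beta_2} = \abs{b_2}/\abs{b_1}$, and $\abs{a_1} = \abs{b_1}$; the initial point $\zeta_3 = (1-\sqrt{1-\delta\widetilde{c}^2})/\widetilde{c}$ pulls back to the announced $z_3$. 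The only real obstacle is the bookkeeping of the several unit-modulus factors in step two, where one must confirm that every occurrence of $e^{i\theta}$ and of $a_1$ versus $\conj{b}_1$ cancels or rescales in exactly the right way; no new analytic estimate beyond Lemma~\ref{lem:zero_at_crit} is needed.
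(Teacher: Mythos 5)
Your proposal is correct and takes essentially the same route as the paper: pass to the local normal form via~\eqref{eqn:subst}--\eqref{eqn:normal}, apply Lemma~\ref{lem:zero_at_crit} to $\widetilde{f}_{\delta\widetilde{c}}$, and transform back, checking $\abs{\alpha_2}\neq\abs{\beta_2}\Leftrightarrow\abs{a_2}\neq\abs{b_2}$. The only difference is that you make explicit the equivariance $H_{f_{\delta c}}(z_0 + e^{i\theta}\zeta) = z_0 + e^{i\theta} H_{\widetilde{f}_{\delta\widetilde{c}}}(\zeta)$ (which is correct, using $h'(z)=a_1\widetilde{h}'(\zeta)$, $g'(z)=b_1\widetilde{g}'(\zeta)$ and $\abs{a_1}=\abs{b_1}$), whereas the paper subsumes this under ``back transformation gives the desired result.''
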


\begin{proof}
We transform $f$ to its local normal form $\widetilde{f}$ at $z_0$, 
recall~\eqref{eqn:subst}, \eqref{eqn:normal}, and $f(z) = \conj{b}_1 e^{-i 
\theta} \widetilde{f}(\zeta)$.
Then $f(z) - \delta c = \conj{b}_1 e^{- i \theta} ( \widetilde{f}(\zeta) - 
\delta \widetilde{c})$ with
\begin{equation*}
\widetilde{c} = - \frac{a_2}{a_1} e^{i \theta} - \conj{\left( \frac{b_2}{b_1} 
e^{i \theta} \right)}
= - (\alpha_2 + \conj{\beta}_2).
\end{equation*}

Let $\im(\widetilde{c}) \neq 0$.  Then, by 
Lemma~\ref{lem:zero_at_crit}, the harmonic Newton 
iteration with initial points $\zeta_\pm = \pm i \sqrt{\delta}$ converges to 
zeros of $\widetilde{f}_{\delta \widetilde{c}}$, when $\delta > 0$ is 
sufficiently small.  Back transformation gives the desired result; 
see~\eqref{eqn:subst}.

Let $\im(\widetilde{c}) = 0$, and $\abs{\alpha_2}
\neq \abs{\beta_2}$, which is equivalent to $\abs{a_2} \neq \abs{b_2}$.  Then, 
by Lemma~\ref{lem:zero_at_crit}, the harmonic 
Newton iteration with initial point $\zeta_3 = \frac{1 - \sqrt{1 - \delta 
\widetilde{c}^2}}{\widetilde{c}}$ converges to a zero of $\widetilde{f}_{\delta
\widetilde{c}}$, when $\delta > 0$ is sufficiently small.  Back transformation 
gives the desired result.
\end{proof}

Varying $\delta$ can lead to a bifurcation of the solutions of $f(z) = \delta 
c$, and the basins of attraction can merge or split; see 
Figure~\ref{fig:bifurcation}.
\begin{figure}[t!]
{\centering
\includegraphics[width=0.495\linewidth]{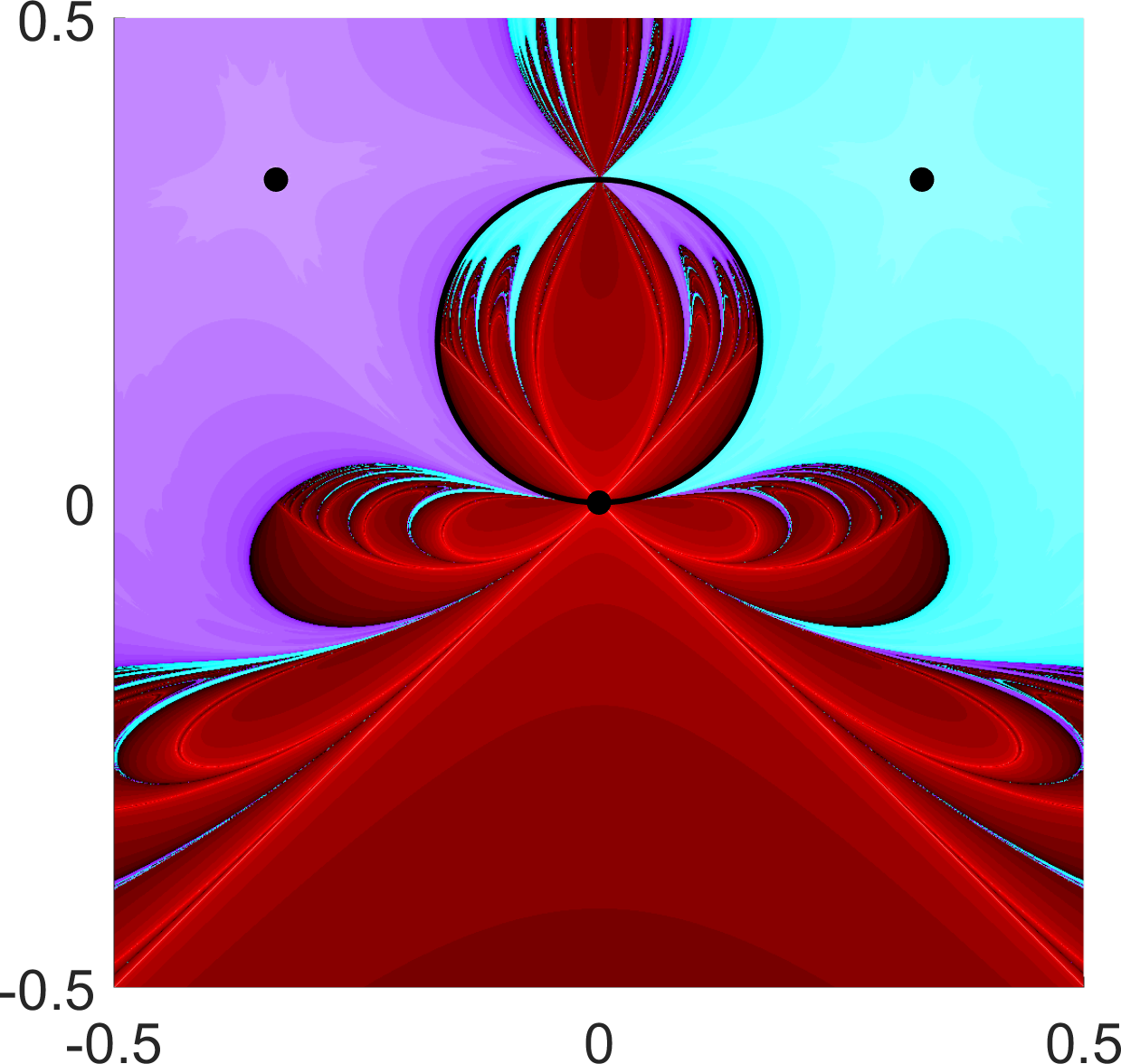}
\includegraphics[width=0.495\linewidth]{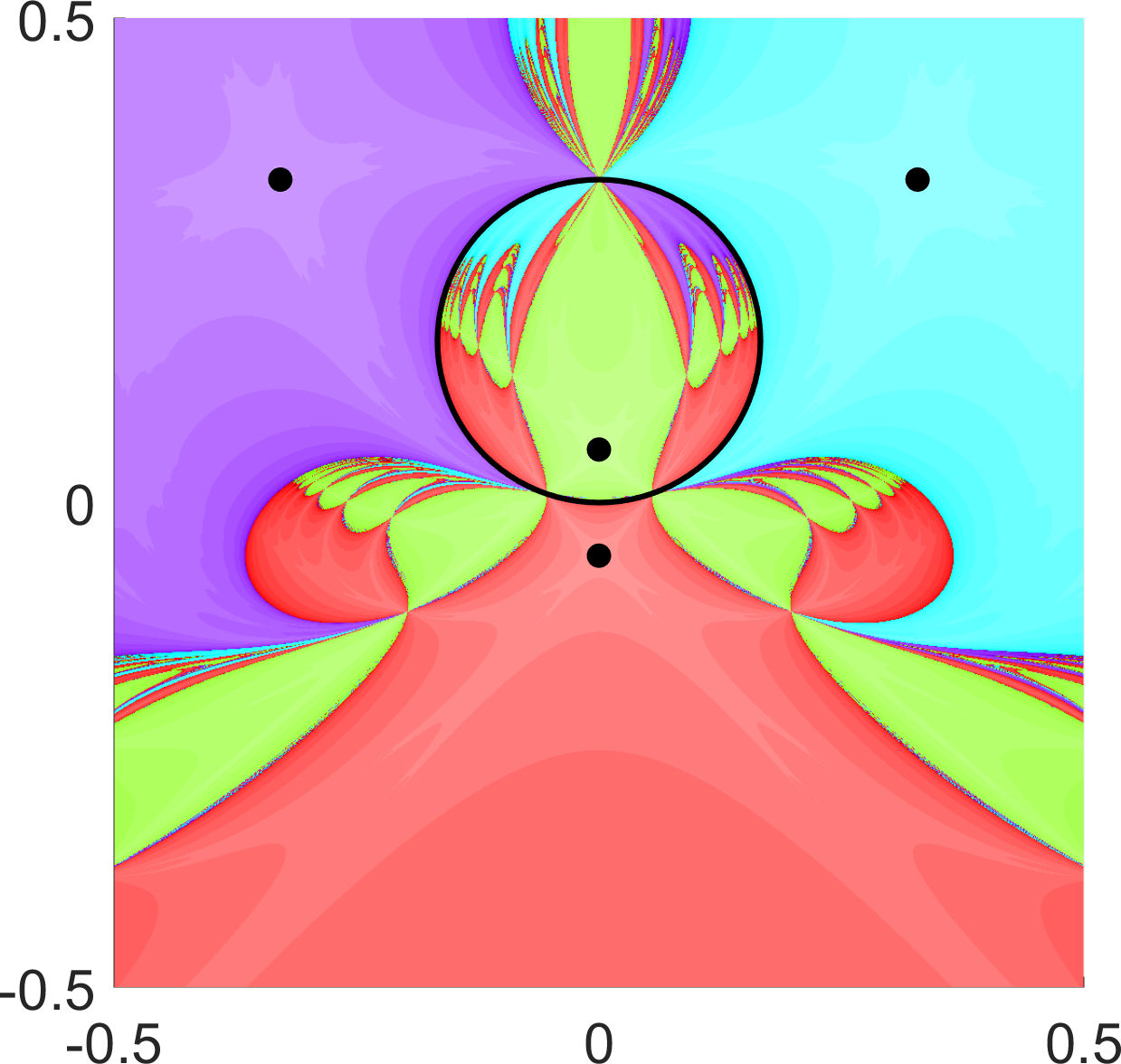}

\includegraphics[width=0.495\linewidth]{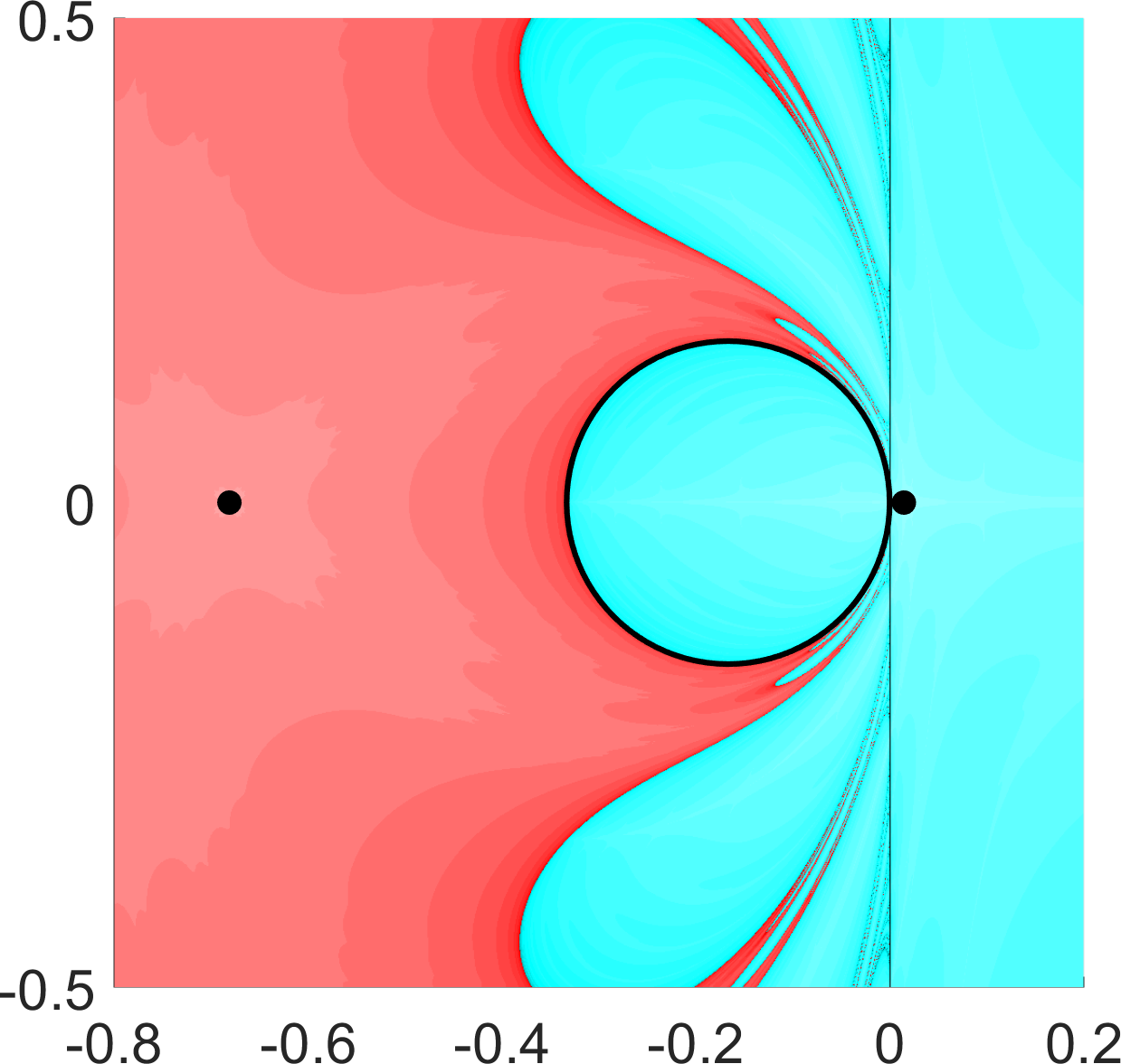}
\includegraphics[width=0.495\linewidth]{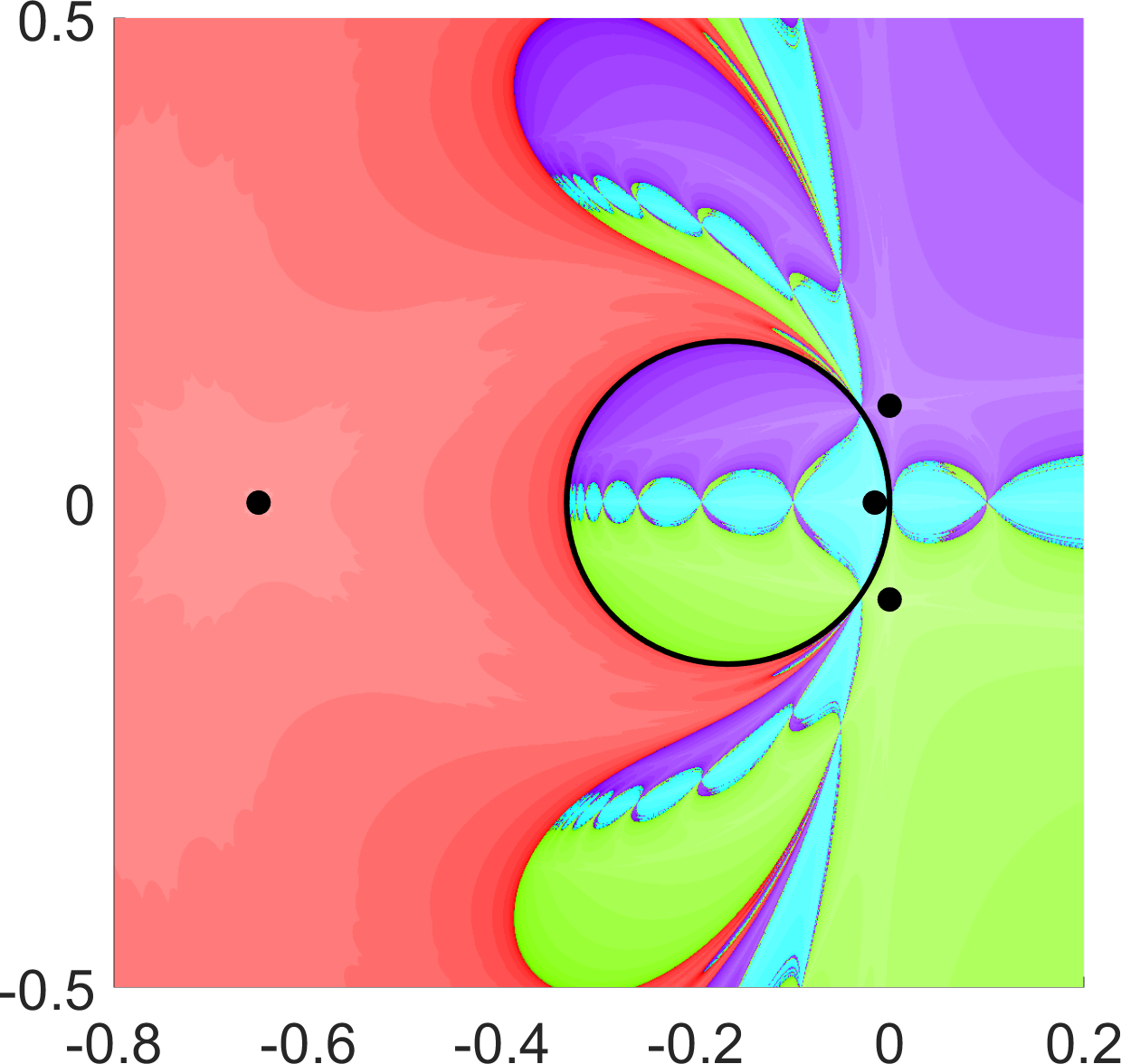}
\vspace{2mm}

}
\caption{Basins of attraction.
Top: $f_{- i\delta}(z) = z + \conj{z} + 2i z^2 + \conj{i z^2} + i \delta$ 
with $\delta = 0$ (left) and $\delta = 0.003$ (right).
Bottom: $f_{-3 \delta}(z) = z + \conj{z} + 2 z^2 + \conj{z}^2 + 3 \delta$ 
with $\delta = - 0.01$ (left) and $\delta = 0.01$ (right).
Black lines mark the critical sets.}
\label{fig:bifurcation}
\end{figure}

\begin{remark}
\begin{enumerate}
\item The conditions $\im(c) \neq 0$ in Lemma~\ref{lem:zero_at_crit} and 
$\im(\widetilde{c}) \neq 0$ in Theorem~\ref{thm:zero_at_crit}
imply that $f(z_0)$ is not a 
\emph{cusp} of the \emph{caustics} of $f$; see~\cite{LiesenZur2018a}.

\item In Figure~\ref{fig:bifurcation}, the singular zero at $z=0$ (top left) 
splits into two non-singular zeros (top right).
Note that the harmonic Newton iteration converges quicker to 
the non-singular zeros than to the singular zero.

\item For $f$ in Figure~\ref{fig:bifurcation} (bottom left) we have 
$H_f(iy) = \frac{i}{2}(y - \frac{0.01}{y})$, $y \in \R$.
Hence, the harmonic Newton iteration diverges for initial points on the 
imaginary axis, which explains the black line.

\item Figure~\ref{fig:bifurcation} (bottom right) suggests that we have zeros 
of 
$f_{\delta c}$ close to $z_\pm = \pm i \sqrt{\delta}$ also in the case $\im(c) 
= 0$ and $\delta > 0$.  However, without further improvements, the above proof 
strategy breaks down for $z_\pm$ as initial points.
\end{enumerate}
\end{remark}

\section{Further Examples}\label{sect:examples}

We illustrate the harmonic Newton method with further examples, using the 
MATLAB implementation in Figure~\ref{fig:code}.

\subsection{Harmonic polynomials}
\label{sect:harmonic_polynomials}

Harmonic polynomials are of the form $f = p + \conj{q}$, where $p$ and $q$ are 
(analytic) polynomials with respective degrees $n$ and $m$.
Wilmshurst conjectured in~\cite{Wilmshurst1998} that the number of zeros of $f$ 
for $n > m$ fulfills
\begin{equation}\label{eqn:wilmshurst_conjecture}
N(f) \leq 3n-2 + m(m-1),
\end{equation}
and proved it for  $m = n-1$, including sharpness of the bound.
For $n = m$, Wilmshurst showed the bound $N(f) \leq n^2$, provided that 
$\lim_{z \to \infty} f(z) = \infty$ holds.  Otherwise $f$ could have infinitely 
many zeros; compare Lemma~\ref{lem:deg_one}. 
Later, the case $m = 1$ was settled in~\cite{KhavinsonSwiatek2003}, and 
sharpness of the bound in~\cite{Geyer2008}.
For several other values of $1 < m < n-1$, the conjecture was shown to be
wrong; see \cite{LeeLerarioLundberg2015,HauensteinLerarioLundbergMehta2015,KhavinsonLeeSaez2018}.

\begin{example} \label{ex:wilmshurst}
We consider the original example of Wilmshurst in~\cite{Wilmshurst1998} showing 
sharpness of~\eqref{eqn:wilmshurst_conjecture} for $m = n-1$. Let 
\begin{equation} \label{eqn:wilmshurst}
f(z) = p(z) + \conj{q(z)}
= z^n + (z-1)^n + \conj{i (z-1)^n -i z^n},
\end{equation}
which has $\deg(p) = n$, and $\deg(q) = n-1$, and $n^2$ zeros.
\begin{figure}[t!]
{\centering
\includegraphics[width=0.48\linewidth]{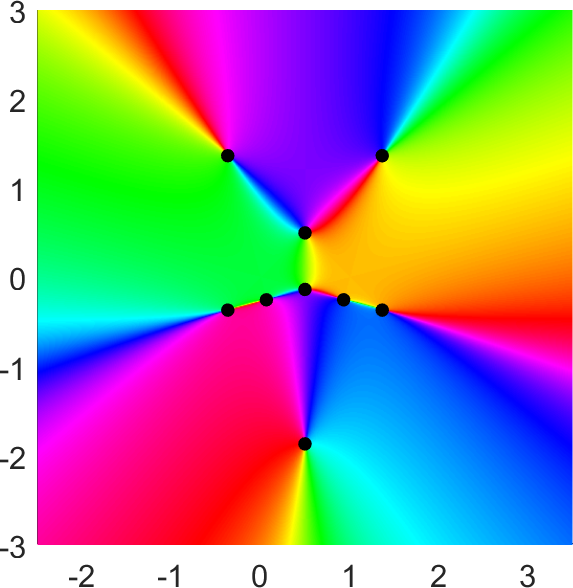}
\includegraphics[width=0.48\linewidth]{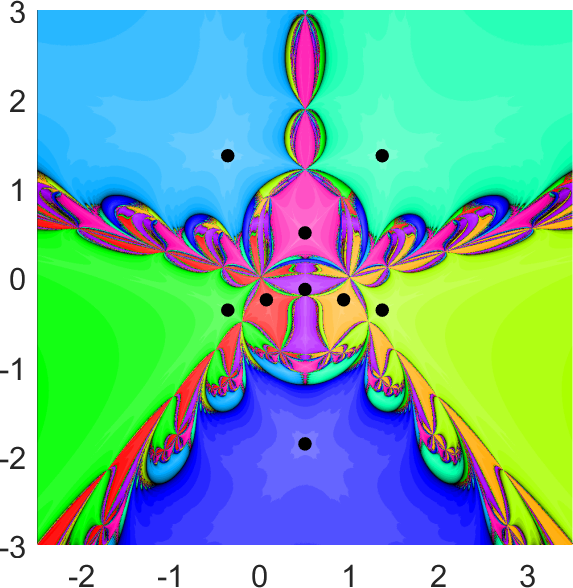}
\vspace{2mm}

\includegraphics[width=0.48\linewidth]{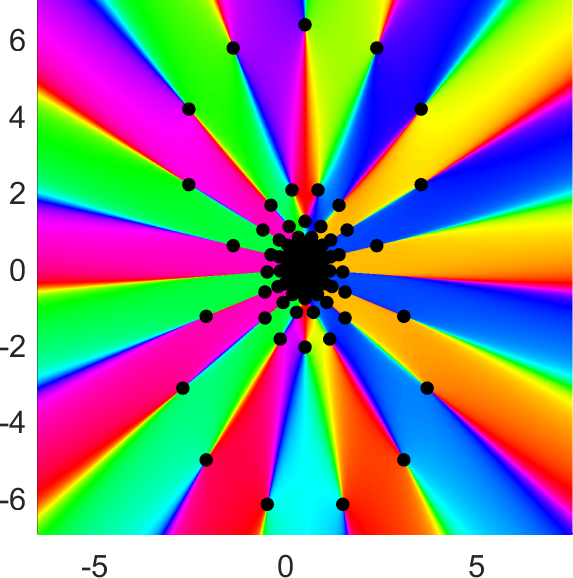}
\includegraphics[width=0.48\linewidth]{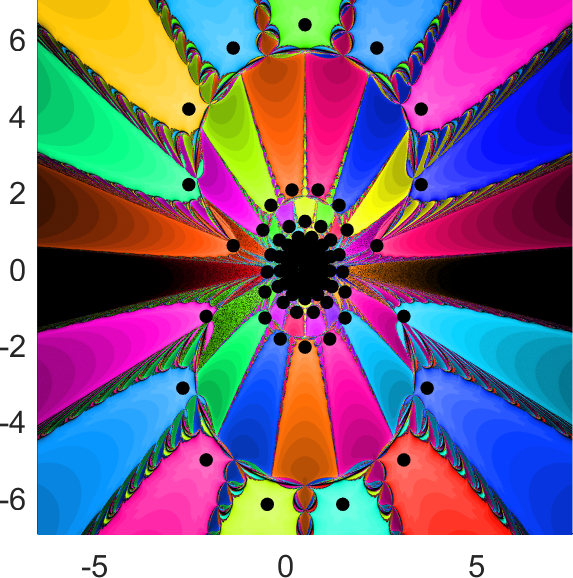}
\vspace{2mm}

}
\caption{Phase plots (left) and basins of attraction (right) for
Wilmshurst's harmonic polynomial~\eqref{eqn:wilmshurst} for $n = 3$ 
(top) and $n = 10$ (bottom).}
\label{fig:wilmshurst}
\end{figure}

To compute the zeros of $f$ with the harmonic Newton method, we use a grid of 
initial 
points with mesh size $0.05$ in the plot region in Figure~\ref{fig:wilmshurst}.
On the left we have the phase plots, and on the right the basins of attraction
for $n = 3$ and the $n^2 = 9$ zeros (top), as well as for $n = 10$ with
its $100$ zeros (bottom).
The plots are centered at $z = 0.5$, since $z \mapsto p(z+0.5)$ and $z 
\mapsto q(z+0.5)$ are even or odd, depending on $n$.

For $n = 3$, the maximal residual of $f$ at the computed zeros is $9.8625 \cdot 
10^{-15}$.
For $n = 10$, we have $\abs{f(z_j)} \leq 3.0146 \cdot 10^{-14}$ at computed 
zeros with $\abs{z_j - 0.5} \leq 1$, and $\abs{f(z_j)} \leq 1.3738 \cdot 
10^{-7}$ at zeros with $\abs{z_j - 0.5} > 1$.
The latter rather large absolute residual is an effect of floating point 
arithmetic.  Indeed, the magnitude of $z^n$ and $(z-1)^n$ is of order $10^8$ at 
the outer zeros, which results in a loss of accuracy of about $8$ digits when 
evaluating $f$.  Evaluating the polynomials $p$ and $q$ with Horner's 
scheme~\cite[Sect.~5.1]{Higham2002} does not alleviate this problem, since we 
still need to compute $p + \conj{q}$.  However, the maximum of the relative 
residuals 
$\abs{f(z_j)}/\abs{z_j-0.5}^{10}$ is $1.4010 \cdot 10^{-14}$, which is quite 
satisfactory.
\end{example}

\subsection{Gravitational lensing}

Gravitational lensing can 
be modeled with harmonic mappings, where positions of lensed images are zeros 
of these functions;
see~\cite{KhavinsonNeumann2008,Petters2010,BeneteauHudson2018}.
We consider point mass lenses and isothermal gravitational lenses.

Example~\ref{ex:mpw} already featured a rational harmonic function from 
gravitational point mass models.
Rhie~\cite{Rhie2003} constructed from this example a gravitational 
lens with the maximum number of lensed images, or equivalently a rational 
harmonic function with the maximum number of zeros, showing sharpness of the 
bound in~\cite{KhavinsonNeumann2006}.

\begin{example} \label{ex:rhie}
Rhie's function
\begin{equation}
f(z)
= (1-\eps) \frac{z^{n-1}}{z^n - r^n} + \frac{\eps}{z} 
- \conj{z} \label{eqn:rhie}
\end{equation}
is obtained from~\eqref{eqn:mpw} by adding a pole at the origin (and 
normalizing the mass to be one).
\begin{figure}[t!]
{\centering
\includegraphics[width=0.48\linewidth]{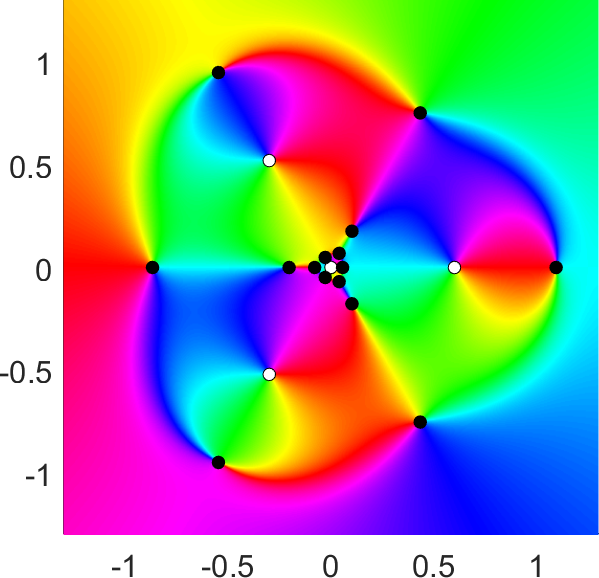}
\includegraphics[width=0.48\linewidth]{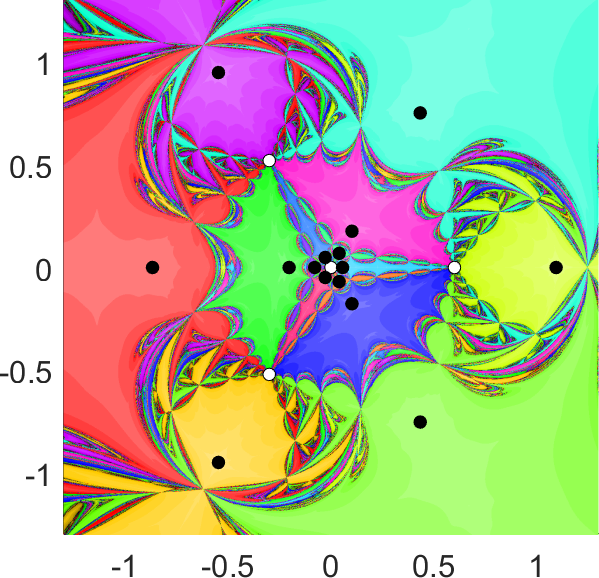}
\vspace{2mm}

}
\caption{Phase plot (left) and basins of attraction (right) for Rhie's
function~\eqref{eqn:rhie} with $r = 0.6$, $n = 3$, 
and $\eps = 0.004$; compare Figure~\ref{fig:mpw}.}
\label{fig:rhie}
\end{figure}
For sufficiently small $\eps$, it has $5n$ zeros; for a rigorous 
proof and quantification of the parameters $\eps$ and $r$
see~\cite{LuceSeteLiesen2014a}.  To compute the zeros of $f$ with $n = 3$, $r = 
0.6$ and $\eps = 0.004$, we apply the harmonic Newton method to a grid of 
initial points (mesh size $0.05$); see Figure~\ref{fig:rhie}.
The maximal residual at the computed zeros is $9.9371 \cdot 10^{-15}$.
\end{example}

\begin{example} \label{ex:einstein}
A point mass lens with a single mass is known as the Chang-Refsdal lens and 
produces an Einstein ring; see~\cite{AnEvans2006}.
It can be modeled by $f(z) = \frac{1}{z} - \conj{z}$.
The zero set of $f$ is the whole unit circle, 
in particular the zeros are not isolated.  Moreover, each zero is singular.
\begin{figure}[t!]
{\centering
\begin{minipage}{.66\linewidth}
\includegraphics[width=0.48\linewidth]{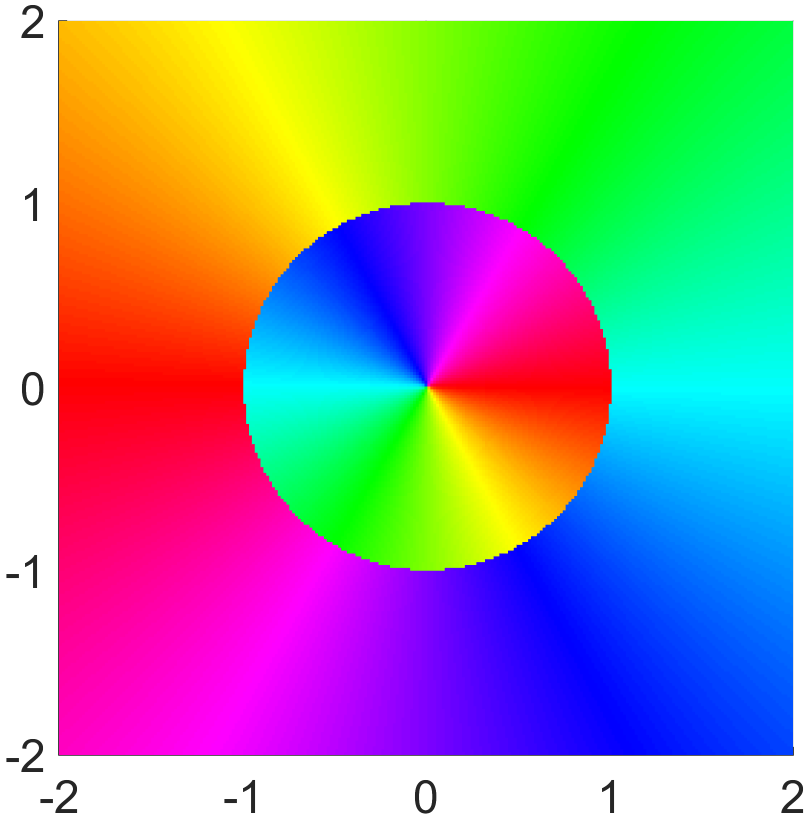}
\includegraphics[width=0.48\linewidth]{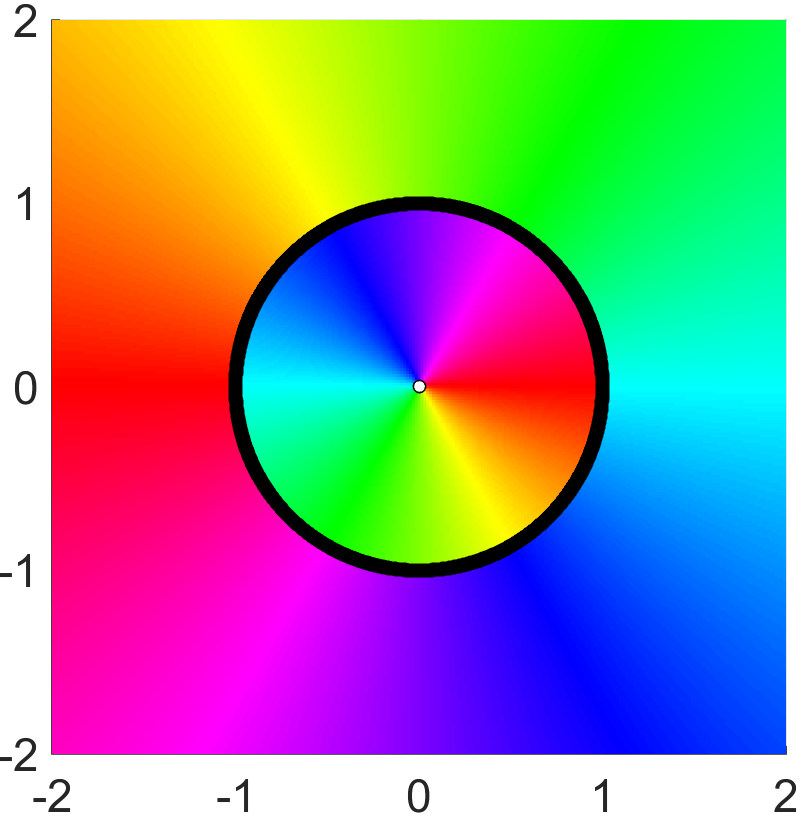}
\end{minipage}
\begin{minipage}{.3\linewidth}
\vspace{-.25cm}

\includegraphics[width=0.978\linewidth]{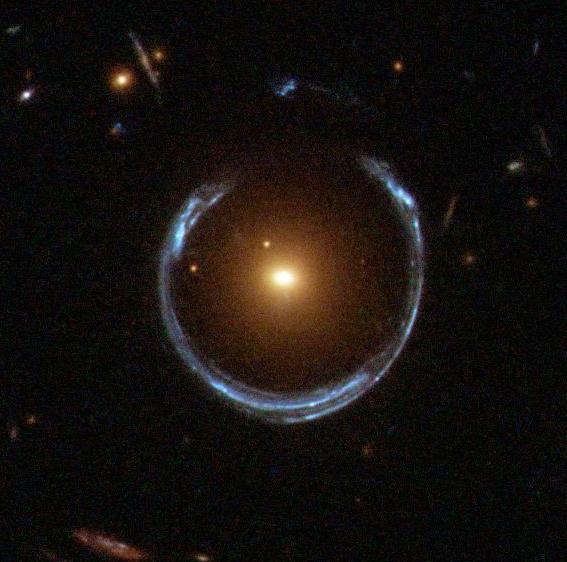}
\end{minipage}
\vspace{2mm}
}

\caption{Phase plots of $f(z) = 1/z - \conj{z}$.  The middle panel shows 
the computed zeros of $f$; see Example~\ref{ex:einstein}. The right panel shows 
an actual gravitional lens (image credit: ESA/Hubble \& NASA).}
\label{fig:einstein}
\end{figure}
To compute zeros of $f$, we take a grid of initial points in $\cc{-2, 2}^2$ 
(mesh size $0.02$).
Figure~\ref{fig:einstein} shows the iterates after at most $30$ steps 
(the mean number of steps is $6.4$). The residuals satisfy $\abs{f(z_k)} \leq 
2.4887 \cdot 10^{-14}$.
The harmonic Newton iteration for $f$ simplifies to $z_{k+1} = 
\frac{2}{1 + \abs{z_k}^2} z_k$ for $\abs{z_k} \neq 1$, showing that $z_k \to 
e^{i \varphi}$ if
$z_0 = r_0 e^{i \varphi} \neq 0$, i.e., the basin of attraction of $e^{i 
\varphi}$ is
\begin{equation*}
A(e^{i \varphi}) = \{ z = r e^{i \varphi} : r > 0, r \neq 1 \}.
\end{equation*}
Note that $A(e^{i \varphi})$ does not contain an open subset around $e^{i 
\varphi}$, which is due to the fact that the zeros of $f$ are not isolated.
\end{example}

\begin{example} \label{ex:isothermal}
We consider gravitational lensing by an isothermal elliptical 
galaxy with compactly supported mass; 
see~\cite{BeneteauHudson2018,KhavinsonLundberg2010,BergweilerEremenko2010}.
After a change of variables, the lensed images are the zeros of the 
transcendental harmonic mapping
\begin{equation} \label{eqn:isothermal}
f(z)
= z - \arcsin \left( \frac{k}{\conj{z} + \conj{w}} 
\right),
\end{equation}
where $w$ is the position of the light source (projected onto the lens plane), 
and $k$ is a real constant related to the shape of the galaxy;
see~\cite{KhavinsonLundberg2010}.  We take the principal branch of arcsine
\begin{figure}
{\centering
\includegraphics[width=0.48\linewidth]{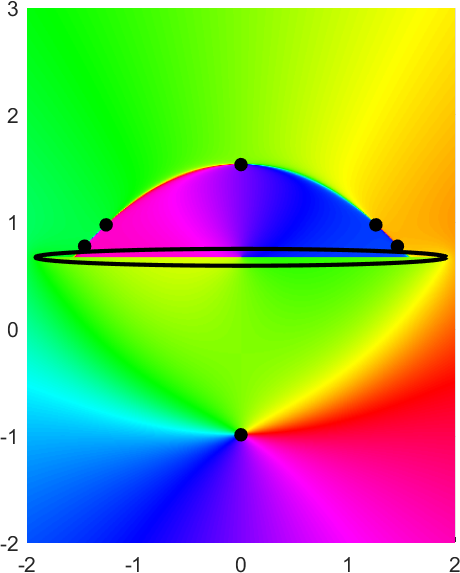}
\includegraphics[width=0.48\linewidth]{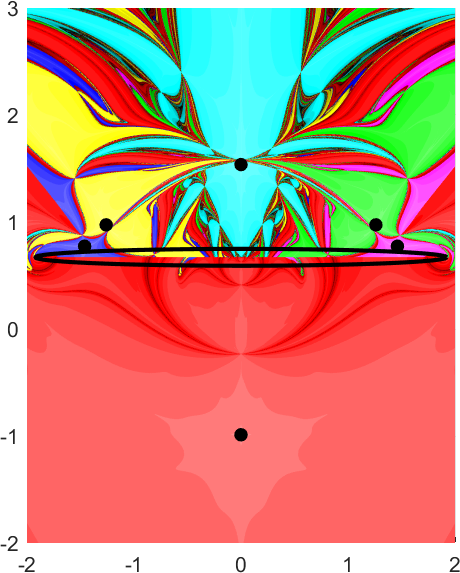}
\vspace{2mm}

}
\caption{Phase plot (left) and basins of attraction (right) for the function
$f(z) = z - \arcsin(1.92/\conj{z - 0.67i})$ in Example~\ref{ex:isothermal};
compare to~\cite[Fig.~3]{KhavinsonLundberg2010}.}
\label{fig:isothermal}
\end{figure}
and compute the zeros of $f$ by applying the harmonic Newton method to a 
grid of initial points in $\cc{-2, 2}^2$ (mesh size $0.02$).
Figure~\ref{fig:isothermal} shows the phase plot of $f$ (left) and the basins 
of attraction (right) for $k = 1.92$ and $w = -0.67i$ 
from~\cite{BergweilerEremenko2010}.
The black ellipse is the shape of the galaxy, and the sharp edge in the phase 
plot (inside the ellipse) is the branch cut of arcsine.
\end{example}

\section{Summary and Outlook}\label{sect:outlook}

We derived a complex formulation of Newton's method for harmonic mappings and,
more generally, for real differentiable functions in $\C$.
The iterations~\eqref{eqn:complex_newton} and its 
harmonic pendant~\eqref{eqn:harmonic_newton} allow to compute the zeros of 
harmonic and even non-analytic complex functions without losing any advantages 
of Newton's method.
The complex formulation makes the iteration amenable to analysis in a complex 
variables spirit.
In particular, close to a pole of a harmonic mapping $f$, we derived initial 
points for which the harmonic Newton iteration 
is guaranteed to converge to zeros of $f$.
Similarly, we derived initial points to obtain solutions of $f(z) = \eta$ for 
certain small $\abs{\eta}$ and $z$ close to a singular zero of $f$.
In both cases, the convergence proof relies on the Newton-Kantorovich theorem.
In particular, our theorems also show existence of zeros.

The harmonic Newton method should prove useful in the further study of harmonic 
polynomials, in particular related to Wilmshurst's conjecture, and more 
generally of harmonic mappings.
Visualizing the basins of attraction can give new insights into the behavior of 
the functions.

For certain classes of harmonic mappings, e.g., harmonic polynomials, it could 
be interesting to find a set $\mathcal{S}$, such that for any zero $z_*$ of 
$f$, the harmonic Newton iteration converges to $z_*$ for at least  one initial 
point in $\mathcal{S}$.  This approach is related to 
Theorem~\ref{thm:zero_at_pole} and Theorem~\ref{thm:zero_at_crit}.  In the
seminal paper~\cite{HubbardSchleicherSutherland2001}, such a set
$\mathcal{S}$ is constructed for complex (analytic) polynomials.

A further analysis of the harmonic Newton map $H_f$ in the spirit of complex 
dynamics should be of great interest, e.g., points of indeterminacy, and the 
continuation of $H_f$ (and of $f$ itself) to the Riemann sphere.
Both have not been addressed in this paper.

For analytic functions, many other iterative root finding methods exist;
see e.g.~\cite{Gilbert2001,Varona2002}.  It could be interesting to generalize 
them to harmonic mappings.
As a drawback we may have to introduce higher order derivatives for harmonic 
mappings.
The efficiency index of several of these methods for analytic functions 
are compared in~\cite{Varona2002}.  In this light, Newton's method should be a 
decent choice.

\paragraph*{Acknowledgments.}
We thank J\"org Liesen for helpful comments on the manuscript.
Moreover, we thank the anonymous referees for several helpful suggestions 
which lead to improvements of the presentation.

\footnotesize
\bibliography{harmonic_newton}
\bibliographystyle{siam} 

\end{document}